\newtheorem{lemma}{Lemma}
\newtheorem{theorem}{Theorem}
\newtheorem{definition}{Definition}
\newtheorem{Ex}{Example}
\newtheorem{remark}{Remark}
\newtheorem{corollary}{Corollary}
\newtheorem*{theorem*}{Theorem}
\newcommand{\ind}{{\rm ind \hspace{.1cm}}}
\newcommand\addvmargin[1]{
  \node[fit=(current bounding box),inner ysep=#1,inner xsep=0]{};}
\begin{document}

\title{Contact Lie poset algebras}

\author[*]{Vincent E. Coll, Jr.}
\author[**]{Nicholas Mayers}
\author[*]{Nicholas Russoniello}

\affil[*]{Department of Mathematics, Lehigh University, Bethlehem, PA, 18015}
\affil[**]{Department of Mathematics, Milwaukee School of Engineering, Milwaukee, WI, 53202}

\maketitle

\begin{center}
    \textbf{Dedication}\\
    
    In honor of Murray Gerstenhaber -- recipient of the 2021 Steele Prize for seminal contribution to research
    
\end{center}

\bigskip
\begin{abstract} 
\noindent
We provide a combinatorial recipe for constructing all posets of height at most two for which the corresponding type-A Lie poset algebra is contact. In the case that such posets are connected, a discrete Morse theory argument establishes that the posets' simplicial realizations are contractible.  It follows from a cohomological result of Coll and Gerstenhaber on Lie semi-direct products that the corresponding contact Lie algebras are absolutely rigid. 
\end{abstract}


\section{Introduction}

This article is a direct follow-up to the article \textit{The index of Lie poset algebras} (see \textbf{\cite{seriesA}},
J. Comb. Theory Ser. A, 2021).  In that article,  the authors developed ``user-friendly'' combinatorial index formulas for type-A Lie poset algebras and focused on those which are Frobenius -- having index zero.\footnote{The index of a Lie algebra is an algebraic invariant and is defined as follows.  If $\mathfrak{g}^*$ is the set of linear one-forms on $\mathfrak{g}$ then

\[
\ind \mathfrak{g}=\min_{\varphi\in \mathfrak{g}^*} \dim  (\ker (B_\varphi)),
\]
where $B_\varphi$ is the associated skew-symmetric \textit{Kirillov form} defined by $B_\varphi(x,y)=\varphi([x,y])$, for all  $x,y\in\mathfrak{g}$. Algebras with index zero are called \textit{Frobenius} and are of interest to those working in invariant theory \textbf{\cite{Ooms}} and deformation theory owing to their connection with constant solutions to the classical Yang-Baxter equation (see \textbf{\cite{G1}} and \textbf{\cite{G2}}). } In particular, they established that the associated \textit{Frobenius poset} is the iterative limit of a sequence of posets built up from building-block posets using gluing rules.  This leads to a constructive characterization of Frobenius, type-A Lie poset algebras where the chains of the associated poset have cardinality at most three. 


Here, we are concerned with contact, type-A Lie poset algebras. Formally, an odd-dimensional Lie algebra $\mathfrak{g}$ is \textit{contact} if 
there exists $\varphi\in\mathfrak{g}^*$ such that $\varphi  \wedge (d\varphi)^n\ne 0$, where dim $\mathfrak{g}=2n+1$. The one-form
$\varphi$  is called a \textit{contact form} (or \textit{contact structure}).  The 
$(2n+1)$-form $\varphi  \wedge (d\varphi)^n\ne 0$ is a \textit{volume form} on the underlying Lie group.  The construction and classification of contact manifolds is a central problem in differential topology (see \textbf{\cite{wein}}).

The twofold goal of this article is to characterize contact, type-A Lie poset algebras whose associated \textit{contact posets} have chains of cardinality at most three and to show that such algebras are absolutely rigid.


Contact Lie algebras have index one, but this is generally not a sufficient condition -- even in the type-A Lie poset setting.\footnote{The Lie algebra $\mathfrak{g}=\langle e_1,e_2,e_3,e_4,e_5,e_6,e_7\rangle$ with relations $[e_1,e_4]=2e_4$, $[e_2,e_4]=e_4$, $[e_1,e_5]=e_5$, $[e_2,e_5]=2e_5$, $[e_3,e_5]=e_5$, $[e_1,e_6]=e_6$, $[e_3,e_6]=e_6$, $[e_2,e_7]=e_7$ and $[e_3,e_7]=2e_7$ is a type-A Lie poset algebra which has index one but is not contact.} 
To find contact, type-A Lie poset algebras, we leverage the index formulas of \textbf{\cite{seriesA}} to identify index-one algebras which are then contact building-block ``candidates.'' From there, 
using a modification of the iterative process for building Frobenius, type-A Lie poset algebras outlined in \textbf{\cite{seriesA}}, we find that
a poset is contact if and only if it is the recursive limit of a sequence of posets constructed using an updated collection of building-block posets and gluing rules. More care must be taken than in the Frobenius case, where it is enough to insure that, during the construction process, the index remains zero.  Here we must also keep track of the evolving contact form.\footnote{A similar limiting process is used in \textbf{\cite{CMToral}} to construct Frobenius (index-realizing) forms on the class of ``toral," Frobenius, type-A Lie poset algebras. The posets underlying toral, type-A Lie poset algebras arise from a generalization of the constructive procedure outlined for building Frobenius posets in \textbf{\cite{seriesA}}.}  This is the first of two principal results and is the main combinatorial result of this paper (see Theorem \ref{thm:contact}).

A discrete Morse theory argument establishes that the simplicial complex associated with any connected, contact poset with chains of cardinality at most three is contractible, so has no simplicial homology (see Theorem~\ref{Nohomology}). A recent result of Coll and Gerstenhaber (Theorem~\ref{CG}) can then be applied to find that the second Lie cohomology group of the corresponding type-A Lie poset algebra with coefficients in itself is zero. 
This yields the second main result of this paper (Theorem \ref{thm:main2}) and mirrors an analogous rigidity result for  Frobenius, type-A Lie poset algebras (see \textbf{\cite{seriesA}}, Theorem 16). 

\bigskip
\noindent
\textbf{Theorem.}
\textit{A contact, type-A Lie poset algebra corresponding to a connected poset of height zero, one, or two is absolutely rigid.}



\section{Preliminaries}

A \textit{finite poset} $(\mathcal{P}, \preceq_{\mathcal{P}})$ consists of a finite set $\mathcal{P}=\{1,\hdots,n\}$ together with a binary relation $\preceq_{\mathcal{P}}$ which is reflexive, anti-symmetric, and transitive. It is further assumed that if $x\preceq_{\mathcal{P}}y$ for $x,y\in\mathcal{P}$, then $x\le y$, where $\le$ denotes the natural ordering on $\mathbb{Z}$. When no confusion will arise, we simply denote a poset $(\mathcal{P}, \preceq_{\mathcal{P}})$ by $\mathcal{P}$, and $\preceq_{\mathcal{P}}$ by $\preceq$. 

Let $x,y\in\mathcal{P}$. If $x\preceq y$ and $x\neq y$, then we call $x\preceq y$ a \textit{strict relation} and write $x\prec y$. Let $Rel(\mathcal{P})$ denote the set of strict relations between elements of $\mathcal{P}$, $Ext(\mathcal{P})$ denote the set of minimal and maximal elements of $\mathcal{P}$, and $Rel_E(\mathcal{P})$ denote the set of strict relations between the elements of $Ext(\mathcal{P})$.

\begin{Ex}\label{ex:stargate}
Consider the poset $\mathcal{P}=\{1,2,3,4\}$ with $1\prec2\prec3,4$.  We have that $$Rel(\mathcal{P})=\{1\prec 2,1\prec 3,1\prec 4,2\prec 3,2\prec 4\},$$  $$Ext(\mathcal{P})=\{1,3,4\},\quad \text{and}\quad Rel_E(\mathcal{P})=\{1\prec 3, 1\prec 4\}.$$
\end{Ex}

\noindent
Recall that, if $x\prec y$ and there exists no $z\in \mathcal{P}$ satisfying $x\prec z\prec y$, then $y$ \textit{covers} $x$ and 
$x\prec y$ is a \textit{covering relation}.  Using this language, the \textit{Hasse diagram} of a poset $\mathcal{P}$ can be reckoned as the graph whose vertices correspond to elements of $\mathcal{P}$ and whose edges correspond to covering relations. A poset $\mathcal{P}$ is \textit{connected} if the Hasse diagram of $\mathcal{P}$ is connected as a graph, and \textit{disconnected} otherwise. Throughout this paper, $C_{\mathcal{P}}$ will denote the number of connected components of the Hasse diagram of $\mathcal{P}$.

\begin{Ex}
Let $\mathcal{P}$ be the poset of Example~\ref{ex:stargate}. The Hasse diagram of $\mathcal{P}$ is given below in Figure~\ref{fig:Hasse}.
\begin{figure}[H]
$$\begin{tikzpicture}
	\node (1) at (0, 0) [circle, draw = black, fill = black, inner sep = 0.5mm, label=left:{1}]{};
	\node (2) at (0, 1)[circle, draw = black, fill = black, inner sep = 0.5mm, label=left:{2}] {};
	\node (3) at (-0.5, 2) [circle, draw = black, fill = black, inner sep = 0.5mm, label=left:{3}] {};
	\node (4) at (0.5, 2) [circle, draw = black, fill = black, inner sep = 0.5mm, label=right:{4}] {};
    \draw (1)--(2);
    \draw (2)--(3);
    \draw (2)--(4);
    \addvmargin{1mm}
\end{tikzpicture}$$
\caption{Hasse diagram of $\mathcal{P}$}\label{fig:Hasse}
\end{figure}
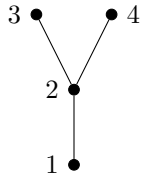
\end{Ex}

Given a subset $S\subset\mathcal{P}$, the \textit{induced subposet generated by $S$} is the poset $\mathcal{P}_S$ on $S$, where $i\preceq_{\mathcal{P}_S}j$ if and only if $i\preceq_{\mathcal{P}}j$. A totally ordered subset $S\subset\mathcal{P}$ is called a \textit{chain}. Using the chains of $\mathcal{P}$, one can define a simplicial complex $\Sigma(\mathcal{P})$ by having chains of cardinality $n$ in $\mathcal{P}$ define the $(n-1)$-dimensional faces of $\Sigma(\mathcal{P})$. A chain $S\subset\mathcal{P}$ is called \textit{maximal} if it is not a proper subset of any other chain $S'\subset \mathcal{P}$. If every maximal chain of a poset $\mathcal{P}$ is of the same cardinality, then we call $\mathcal{P}$ \textit{pure}. When a poset is pure, there is a natural grading on the elements of $\mathcal{P}$. This grading is made precise by a rank function $r:\mathcal{P}\to \mathbb{Z}_{\ge 0}$, where minimal elements have rank zero, and if $x$ is covered by $y$ in $\mathcal{P}$, then $r(y)=r(x)+1$. Note that the poset of Example 1 is pure since its maximal chains $1\prec 2\prec 3$ and $1\prec 2\prec 4$ both have cardinality three.  This poset has a single minimal element of rank zero, namely $\{1\}$, a single element of rank one, namely $\{2\}$, and two maximal elements of rank two, namely $\{3,4\}$. We define the \textit{height} of a poset $\mathcal{P}$ to be one less than the largest cardinality of a chain in $\mathcal{P}$.

The following family of posets and poset operation will be important in the sections that follow.

\begin{definition}\label{def:comppos}
Let $\mathcal{P}$ be the pure poset with $r_i$ elements of rank $i$, for $0\le i\le n$, and every possible relation between elements of differing rank. We denote such ``complete" posets by $\mathcal{P}(r_0, r_1,\hdots, r_n)$. 
\end{definition}

\begin{Ex}
Using the notation of Definition~\ref{def:comppos}, the poset of Example~\ref{ex:stargate} is $\mathcal{P}(1,1,2)$.
\end{Ex}

\begin{definition}
Given two posets $\mathcal{P}$ and $\mathcal{Q}$, the \textit{disjoint sum} of $\mathcal{P}$ and $\mathcal{Q}$ is the poset $\mathcal{P}+\mathcal{Q}$ on the disjoint sum of $\mathcal{P}$ and $\mathcal{Q}$, where $s\preceq_{\mathcal{P}+\mathcal{Q}} t$ if either 
\begin{itemize}
    \item $s,t\in \mathcal{P}$ and $s\preceq_{\mathcal{P}} t$, or
    \item $s,t\in \mathcal{Q}$ and $s\preceq_{\mathcal{Q}} t$.
\end{itemize}
\end{definition}

Let $\mathcal{P}$ be a finite poset and \textbf{k} be an algebraically closed field of characteristic zero, which we may take to be the complex numbers. The (associative) \textit{incidence algebra} $A(\mathcal{P})=A(\mathcal{P}, \textbf{k})$ is the span over $\textbf{k}$ of elements $e_{i,j}$, for $i,j\in\mathcal{P}$ satisfying $i\preceq j$, with product given by setting $e_{i,j}e_{kl}=e_{i,l}$ if $j=k$ and $0$ otherwise. The \textit{trace} of an element $\sum c_{i,j}e_{i,j}$ is $\sum c_{i,i}.$

We can equip $A(\mathcal{P})$ with the commutator product $[a,b]=ab-ba$, where juxtaposition denotes the product in $A(\mathcal{P})$, to produce the \textit{Lie poset algebra} $\mathfrak{g}(\mathcal{P})=\mathfrak{g}(\mathcal{P}, \textbf{k})$. If $|\mathcal{P}|=n$, then both $A(\mathcal{P})$ and $\mathfrak{g}(\mathcal{P})$ may be regarded as subalgebras of the algebra of $n \times n$ upper-triangular matrices over $\textbf{k}$. Such a matrix representation is realized by replacing each basis element $e_{i,j}$ by the $n\times n$ matrix $E_{i,j}$ containing a 1 in the $i,j$-entry and 0's elsewhere. The product between elements $e_{i,j}$ is then replaced by matrix multiplication between the $E_{i,j}$.

\begin{Ex}\label{ex:posetmat}
Let $\mathcal{P}$ be the poset of Example~\ref{ex:stargate}. The matrix form of elements in $\mathfrak{g}(\mathcal{P})$ is illustrated in Figure~\ref{fig:tA}, where the $*$'s denote potential non-zero entries. 
\begin{figure}[H]
$$\kbordermatrix{
    & 1 & 2 & 3 & 4  \\
   1 & * & * & * & *   \\
   2 & 0 & * & * & *  \\
   3 & 0 & 0 & * & 0  \\
   4 & 0 & 0 & 0 & *  \\
  }$$
\caption{Matrix form of $\mathfrak{g}(\mathcal{P})$, for $\mathcal{P}=\{1,2,3,4\}$ with $1\prec 2\prec 3,4$}\label{fig:tA}
\end{figure}
\end{Ex}

\noindent
Restricting $\mathfrak{g}(\mathcal{P})$ to trace-zero matrices yields a subalgebra of the first classical family $A_{n-1}=\mathfrak{sl}(n)$.  We denote the resulting \textit{type-A Lie poset algebra} by $\mathfrak{g}_A(\mathcal{P})$. 

In \textbf{\cite{seriesA}}, the authors establish combinatorial index formulas for type-A Lie poset algebras, which are recorded in Theorem~\ref{thm:gind} below. We require a preliminary definition.

\begin{definition}
Let $\mathcal{P}$ be a poset and $j\in \mathcal{P}$.  Define 

$$D(\mathcal{P},j)=|\{i\in \mathcal{P}~|~i\prec j\}|,$$ 

$$U(\mathcal{P},j)=|\{i\in \mathcal{P}~|~j\prec i\}|,$$ and 

$$UD(\mathcal{P},j) =  \begin{cases} 
      |U(\mathcal{P},j)-D(\mathcal{P},j)|, & U(\mathcal{P},j)\neq D(\mathcal{P},j); \\
      2, & \text{otherwise.}
   \end{cases}
$$
\end{definition}

\begin{theorem}[Coll and Mayers \textbf{\cite{seriesA}}, Theorem 9]\label{thm:gind}
If $\mathcal{P}$ is a poset of height at most two, then 
$$\ind\mathfrak{g}_A(\mathcal{P})=|Rel_E(\mathcal{P})|-|\mathcal{P}|+2\cdot C_{\mathcal{P}}-1+\sum_{j\in \mathcal{P}\backslash Ext(\mathcal{P})}UD(\mathcal{P},j).$$
\end{theorem}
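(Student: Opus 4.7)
The plan is to compute $\ind \mathfrak{g}_A(\mathcal{P})$ by exhibiting a regular linear functional $\varphi \in \mathfrak{g}_A(\mathcal{P})^*$ and analyzing $\dim \ker B_\varphi$. I would work in the matrix model with basis $\{E_{i,j}\,:\,i \prec j\}$ together with a basis of trace-zero diagonal matrices, and take $\varphi = \sum_{i\prec j} c_{i,j}\varphi_{i,j}$, where $\varphi_{i,j}$ extracts the $(i,j)$-entry and the scalars $c_{i,j}$ are generic. The Kirillov form is read off from the bracket identities
\[
[E_{i,j}, E_{k,l}] = \delta_{jk}E_{i,l} - \delta_{li}E_{k,j}, \qquad [H, E_{k,l}] = (H_{k,k}-H_{l,l})E_{k,l},
\]
and an element $F = \sum f_{k,l}E_{k,l} + D$ (with $D$ trace-zero diagonal) lies in $\ker B_\varphi$ precisely when $\varphi([F, E_{i,j}]) = 0$ and $\varphi([F, H]) = 0$ for every basis vector of the algebra.

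The strategy is to partition the defining equations into a \emph{local} system at each middle vertex and a \emph{global} system on the extremal skeleton. The test vectors split as: (a) trace-zero diagonals, producing equations weighted by $c_{k,l}$ on the off-diagonal coefficients; (b) $E_{i,j}$ with both $i,j \in Ext(\mathcal{P})$, coupling the diagonal part of $F$ to its off-diagonal coefficients on extremal relations; and (c) $E_{i,j}$ with at least one endpoint at a middle vertex $j$, producing a local system centered at $j$. For generic choice of $c_{i,j}$ these blocks decouple, reducing the kernel computation to two independent pieces. Locally, the block at each middle vertex $j$ is the Kirillov form of a ``star'' sub-configuration on $D(\mathcal{P},j) + 1 + U(\mathcal{P},j)$ elements, whose corank a direct computation shows to equal $|U(\mathcal{P},j)-D(\mathcal{P},j)|$ generically, jumping to $2$ when $U=D$ since the relevant diagonal-weight eigenvalue on the local $H$-action vanishes and creates an extra null vector. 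Globally, the extremal block reduces to a bipartite incidence problem whose corank is $|Rel_E(\mathcal{P})|-|\mathcal{P}|+2C_\mathcal{P}-1$: each connected component of $\mathcal{P}$ contributes two unpinned diagonal parameters (one coming from its rank-$0$ elements and one from its rank-$2$ elements), while the overall $-1$ encodes the trace-zero condition.

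The main obstacle is justifying the decoupling step: relations $i \prec k$ in $Rel_E(\mathcal{P})$ that factor through a middle vertex $j$ a priori create cross-block coupling between the local and global analyses. I would address this by ordered elimination, first solving the local equations at each middle vertex to express the off-diagonal coefficients of $F$ supported on middle-vertex relations in terms of the diagonal coefficients of $F$, then substituting back into the equations from block (b) and checking that the residual system depends only on extremal relations and diagonal parameters. A Zariski-openness argument then reduces the rank computation to a single explicit choice of $\varphi$, for instance with the $c_{i,j}$ taken to be algebraically independent. A natural sanity check is the three-element chain $1\prec 2\prec 3$, where the formula returns $1 - 3 + 2 - 1 + 2 = 1$, matching the index of the Borel subalgebra of $\mathfrak{sl}_3$ and illustrating the role of the $UD(\mathcal{P},2) = 2$ contribution from the balanced middle vertex.
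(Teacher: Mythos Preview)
The paper does not prove this statement; it is quoted verbatim as Theorem~9 of \textbf{\cite{seriesA}} and used as a black box throughout. So there is no in-paper proof to compare your proposal against.

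That said, your outline has a structural problem worth flagging. You propose to split $\ker B_\varphi$ into a ``global'' block on the extremal skeleton contributing $|Rel_E(\mathcal{P})|-|\mathcal{P}|+2C_{\mathcal{P}}-1$ and ``local'' blocks at each middle vertex contributing $UD(\mathcal{P},j)$. But the global quantity can be negative: if $\mathcal{P}$ is connected and $\mathcal{P}_{Ext(\mathcal{P})}$ is a tree, then $|Rel_E(\mathcal{P})|=|Ext(\mathcal{P})|-1$ and the global term becomes $-|\mathcal{P}\backslash Ext(\mathcal{P})|$. A negative number cannot be the corank of a matrix block, so the Kirillov form does not block-diagonalize in the way you describe, and the ``decoupling'' you acknowledge as the main obstacle is not merely a technical nuisance but fails as stated. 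What is really going on is an inclusion--exclusion or rank--nullity bookkeeping in which the local star at $j$ overcounts the diagonal parameters shared with the extremal skeleton, and the global term compensates; making this precise requires tracking how the diagonal coordinate at each middle vertex is simultaneously constrained by its local block and by the extremal equations, not a clean direct-sum argument.

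A second, smaller issue: your global block is phrased in terms of $Ext(\mathcal{P})$, yet the formula you quote for its corank involves $|\mathcal{P}|$ rather than $|Ext(\mathcal{P})|$. This mismatch is another symptom of the same problem---the middle vertices' diagonal degrees of freedom have to be absorbed somewhere, and your sketch does not say where. The actual argument in \textbf{\cite{seriesA}} proceeds instead by an inductive construction-sequence analysis (of the kind used elsewhere in the present paper), tracking how the index changes under the gluing rules, rather than by a single global rank computation.
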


\begin{theorem}[Coll and Mayers \textbf{\cite{seriesA}}, Theorem 12]\label{thm:h2frob}
A poset $\mathcal{P}$ of height at most two is Frobenius if and only if
\begin{itemize}
    \item $|Ext(\mathcal{P}_{\{j\in\mathcal{P}~|~i\preceq j\text{ or }j\preceq i\}})|=3$ for all $i\in\mathcal{P}\backslash Ext(\mathcal{P})$, and
    \item the Hasse diagram of $\mathcal{P}_{Ext(\mathcal{P})}$ is a tree.
\end{itemize}
\end{theorem}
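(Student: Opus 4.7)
\noindent The plan is to use the index formula of Theorem~\ref{thm:gind} as the central accounting tool and to squeeze the equation $\ind \mathfrak{g}_A(\mathcal{P}) = 0$ between two universal combinatorial inequalities.

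The first step is to repackage both hypotheses arithmetically. Since $\mathcal{P}$ has height at most two, any non-extremal $i \in \mathcal{P}$ is sandwiched between something minimal and something maximal (any element strictly below $i$ must be minimal in $\mathcal{P}$, else a chain of length four would exist, and symmetrically above), so in the induced subposet on elements comparable to $i$ the set of extremals consists exactly of the elements strictly below and strictly above $i$. Hence $|Ext(\mathcal{P}_{\{j : i \preceq j \text{ or } j \preceq i\}})| = D(\mathcal{P}, i) + U(\mathcal{P}, i)$, and since $D(\mathcal{P}, i), U(\mathcal{P}, i) \geq 1$, the first condition is equivalent to $D(\mathcal{P}, i) + U(\mathcal{P}, i) = 3$, which in particular forces $UD(\mathcal{P}, i) = 1$. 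The second condition says $\mathcal{P}_{Ext(\mathcal{P})}$ is connected with $|Rel_E(\mathcal{P})| = |Ext(\mathcal{P})| - 1$.

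For ($\Leftarrow$), I would substitute these equalities into Theorem~\ref{thm:gind} and observe the telescoping cancellation, using that connectivity of $\mathcal{P}_{Ext(\mathcal{P})}$ lifts to $C_{\mathcal{P}} = 1$ because every non-extremal element attaches to some extremal. For ($\Rightarrow$), I would squeeze $\ind = 0$ between the inequalities (i) $UD(\mathcal{P}, j) \geq 1$ for each non-extremal $j$ and (ii) $|Rel_E(\mathcal{P})| \geq |Ext(\mathcal{P})| - C_{\mathcal{P}}$. The crux of (ii) is the auxiliary claim that the Hasse diagram of $\mathcal{P}_{Ext(\mathcal{P})}$ has exactly $C_{\mathcal{P}}$ connected components: since two non-extremal elements are never adjacent in a height-two Hasse diagram, any extremal-to-extremal walk in the Hasse diagram of $\mathcal{P}$ can be rerouted by replacing each middle detour $x \prec j \prec y$ with the transitively induced covering $x \prec y$ in $\mathcal{P}_{Ext(\mathcal{P})}$. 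Combining (i) and (ii) with the index formula forces equality throughout, yielding $C_{\mathcal{P}} = 1$, $|Rel_E(\mathcal{P})| = |Ext(\mathcal{P})| - 1$, and $UD(\mathcal{P}, j) = 1$ for every non-extremal $j$; the first two equalities recover the tree condition.

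The main obstacle is upgrading the weak consequence ``$UD(\mathcal{P}, j) = 1$'' to the full first hypothesis ``$D(\mathcal{P}, j) + U(\mathcal{P}, j) = 3$'', since the former only gives $|D(\mathcal{P}, j) - U(\mathcal{P}, j)| = 1$. I would exploit the tree structure just obtained: for each non-extremal $j$, transitivity shows that every one of the $D(\mathcal{P}, j) \cdot U(\mathcal{P}, j)$ pairs of extremals sandwiching $j$ belongs to $Rel_E(\mathcal{P})$, so the complete bipartite graph $K_{D(\mathcal{P}, j), U(\mathcal{P}, j)}$ embeds into the tree $\mathcal{P}_{Ext(\mathcal{P})}$. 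Acyclicity then forces $\min\bigl(D(\mathcal{P}, j), U(\mathcal{P}, j)\bigr) = 1$, since $K_{a,b}$ with $a, b \geq 2$ contains a $4$-cycle; combined with $|D(\mathcal{P}, j) - U(\mathcal{P}, j)| = 1$ this pins $\{D(\mathcal{P}, j), U(\mathcal{P}, j)\} = \{1, 2\}$, recovering the first condition.
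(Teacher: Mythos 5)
The paper does not actually prove this statement --- it is imported verbatim from \textbf{\cite{seriesA}} (Theorem 12 there) as a quoted result, so there is no in-paper proof to compare against. Judged on its own terms, your derivation from the quoted index formula (Theorem~\ref{thm:gind}) is correct and self-contained: the reduction of the first bullet to $D(\mathcal{P},i)+U(\mathcal{P},i)=3$ is valid because height at most two forces everything strictly below (resp.\ above) a non-extremal element to be minimal (resp.\ maximal) in $\mathcal{P}$; the squeeze $0=\ind\mathfrak{g}_A(\mathcal{P})\geq C_{\mathcal{P}}-1\geq 0$ correctly forces equality in all three estimates; and the $K_{D,U}$-inside-a-tree argument is exactly the right way to upgrade $UD(\mathcal{P},j)=1$ to $\{D,U\}=\{1,2\}$, which is the one step that a naive reading of the index formula does not give you.

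One small case is glossed over in your rerouting argument for the claim that the Hasse diagram of $\mathcal{P}_{Ext(\mathcal{P})}$ has exactly $C_{\mathcal{P}}$ components. A walk in the Hasse diagram of $\mathcal{P}$ may pass through a non-extremal vertex $j$ via two neighbors on the \emph{same} side of $j$ (two minimal elements $x_1,x_2\prec j$, or dually two maximal ones), and then there is no ``transitively induced covering $x\prec y$'' to substitute. The fix is one line: since $j$ is non-extremal there is some maximal $z\succ j$ (resp.\ minimal $z\prec j$), and transitivity gives $x_1\prec z$ and $x_2\prec z$, so the detour $x_1\hbox{--}j\hbox{--}x_2$ reroutes as $x_1\hbox{--}z\hbox{--}x_2$ inside the Hasse diagram of $\mathcal{P}_{Ext(\mathcal{P})}$. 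With that case added, and the symmetric observation in the $(\Leftarrow)$ direction that each relation $x\prec y$ in $Rel_E(\mathcal{P})$ is realized by a path of length at most two in the Hasse diagram of $\mathcal{P}$ (so that connectivity of $\mathcal{P}_{Ext(\mathcal{P})}$ really does lift to $C_{\mathcal{P}}=1$), the proof is complete.
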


\section{Contact Lie algebras}

In this section, we establish some standard notation and provide several structural results regarding contact Lie algebras. 

Let $\mathfrak{g}$ be an $n$-dimensional Lie algebra with ordered basis $\mathscr{B}(\mathfrak{g})=\{E_1,\dots,E_n\}$, and define $$C(\mathfrak{g},\mathscr{B}(\mathfrak{g}))=([E_i,E_j])_{1\leq i,j\leq n}$$ to be the \textit{commutator matrix} associated with $\mathfrak{g}$. Now, for any $\varphi\in\mathfrak{g}^*$, define the matrix
  $$[B_{\varphi}]=\varphi\left(C(\mathfrak{g},\mathscr{B}(\mathfrak{g}))\right)=(\varphi([E_i,E_j]))_{1\leq i,j\leq n}.$$  
  
\begin{remark}
Given a Lie algebra $\mathfrak{g}$ such that $\ind\mathfrak{g}=k$, a one-form $\varphi\in\mathfrak{g}^*$ is called regular if it is index realizing; that is, if $\dim\ker B_{\varphi}=\ind\mathfrak{g}$. Note that $\varphi\in\mathfrak{g}^*$ is regular if and only if $\text{rank}\left([B_{\varphi}]\right)=\dim\mathfrak{g}-\ind\mathfrak{g}$.
\end{remark}
  
\noindent
Recall that $\mathfrak{g}$ is contact only if it is odd-dimensional, so let $n=2k+1.$ Let $[I]^t=(E_1\hdots E_{2k+1})$ and define $$\widehat{C}(\mathfrak{g},\mathscr{B}(\mathfrak{g}))=\begin{bmatrix}
0 & [I]^t\\
-[I] & C(\mathfrak{g},\mathscr{B}(\mathfrak{g}))
\end{bmatrix}.$$  
If $\{E_1^*,\dots,E_{2k+1}^*\}$ is the ``dual basis" associated to $\mathscr{B}(\mathfrak{g}),$ then $\varphi$ can be written as a linear combination\linebreak $\varphi=\sum_{i=1}^{2k+1}x_iE_i^*.$ In vector notation, $[\varphi]=(x_1,\dots,x_{2k+1})^t.$ Applying $\varphi$ to each entry of $\widehat{C}(\mathfrak{g},\mathscr{B}(\mathfrak{g}))$ yields the $(2k+2)$-dimensional skew-symmetric matrix $$\left[\widehat{B}_{\varphi}\right]=\varphi\left(\widehat{C}(\mathfrak{g},\mathscr{B}(\mathfrak{g}))\right)=\begin{bmatrix}
0 & [\varphi]^t\\
-[\varphi] & [B_{\varphi}]
\end{bmatrix}.$$    


\noindent
Straightforward computations give the following convenient characterization of contact Lie algebras.

\begin{theorem}[Salgado \textbf{\cite{Sally}}]\label{thm:det} Let $\mathfrak{g}$ be an $n$-dimensional Lie algebra with $\varphi\in\mathfrak{g}^*$. If $n$ is odd, then $\mathfrak{g}$ is contact with contact form $\varphi$ if and only if $\det\left(\left[\widehat{B}_{\varphi}\right]\right)\neq 0$.
\end{theorem}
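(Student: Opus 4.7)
The plan is to reduce the odd-dimensional condition $\varphi\wedge(d\varphi)^k\ne 0$ to a determinantal condition on an even-sized skew-symmetric matrix via a standard ``coning'' trick, then invoke the classical identity $\mathrm{Pf}(M)^2=\det(M)$ for skew-symmetric $M$ of even size.

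First, I would extend $\mathscr{B}(\mathfrak{g})$ to an ordered basis $\{E_0,E_1,\dots,E_{2k+1}\}$ of a $(2k+2)$-dimensional space by formally adjoining an auxiliary vector $E_0$, and introduce the 2-form
$$\Omega \;=\; E_0^*\wedge\varphi \;+\; d\varphi,$$
where $d\varphi(E_i,E_j)=-\varphi([E_i,E_j])$ is the Chevalley--Eilenberg differential. Since 2-forms commute with every form in the exterior algebra, the binomial theorem gives
$$\Omega^{k+1} \;=\; \sum_{j=0}^{k+1}\binom{k+1}{j}(E_0^*\wedge\varphi)^{j}\wedge(d\varphi)^{k+1-j}.$$
The $j=0$ summand vanishes because $(d\varphi)^{k+1}$ has degree $2k+2$ but is built from only the $2k+1$ covectors $E_1^*,\dots,E_{2k+1}^*$, while every summand with $j\ge 2$ vanishes because $(E_0^*\wedge\varphi)^2=0$. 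Thus $\Omega^{k+1}=(k+1)\,E_0^*\wedge\varphi\wedge(d\varphi)^k$, and so $\varphi\wedge(d\varphi)^k\ne 0$ if and only if $\Omega^{k+1}\ne 0$.

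Next, because $\Omega$ is a 2-form on a $(2k+2)$-dimensional space, the classical identification $\Omega^{k+1}/(k+1)!=\mathrm{Pf}([\Omega])\,E_0^*\wedge\cdots\wedge E_{2k+1}^*$ shows that $\Omega^{k+1}\ne 0$ if and only if $\mathrm{Pf}([\Omega])\ne 0$, where $[\Omega]=(\Omega(E_i,E_j))_{0\le i,j\le 2k+1}$. A direct computation in the ordered basis gives
$$[\Omega]\;=\;\begin{bmatrix} 0 & [\varphi]^t\\ -[\varphi] & -[B_{\varphi}]\end{bmatrix},$$
which differs from $[\widehat{B}_{\varphi}]$ only in the sign of the lower-right block. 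Multiplying all entries by $-1$ (which preserves the determinant in this even size) and then negating both the zeroth row and the zeroth column (which multiplies the determinant by $(-1)^2=1$) sends $[\Omega]$ to $[\widehat{B}_{\varphi}]$; hence $\det([\Omega])=\det([\widehat{B}_{\varphi}])$.

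Finally, the identity $\mathrm{Pf}(M)^2=\det(M)$ chains the equivalences $\varphi\wedge(d\varphi)^k\ne 0\Longleftrightarrow \Omega^{k+1}\ne 0\Longleftrightarrow \mathrm{Pf}([\Omega])\ne 0\Longleftrightarrow \det([\widehat{B}_{\varphi}])\ne 0$, completing the proof. The main obstacle is verifying the coning identity $\Omega^{k+1}=(k+1)\,E_0^*\wedge\varphi\wedge(d\varphi)^k$; once this is in hand, the remaining steps amount to elementary linear algebra and sign bookkeeping. An alternative route avoids the coning construction and instead expands $\mathrm{Pf}([\widehat{B}_{\varphi}])$ along the zeroth row via the recursive Pfaffian formula, matching the resulting expression coefficient-by-coefficient against the coordinate expansion of $\varphi\wedge(d\varphi)^k$.
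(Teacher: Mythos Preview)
The paper does not supply its own proof of this statement: it merely precedes the theorem with the remark that ``straightforward computations give the following convenient characterization'' and attributes the result to Salgado \textbf{\cite{Sally}}. So there is no argument in the paper to compare your approach against.

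Your proof is correct. The coning identity $\Omega^{k+1}=(k+1)\,E_0^*\wedge\varphi\wedge(d\varphi)^k$ holds for exactly the reasons you give (the $j=0$ term vanishes by a degree count on the $(2k+1)$-dimensional subspace, and the $j\ge 2$ terms vanish because $E_0^*\wedge E_0^*=0$). The entrywise computation of $[\Omega]$ is right, and your sign bookkeeping---multiplying the whole $(2k+2)\times(2k+2)$ matrix by $-1$ and then negating row and column $0$---does convert $[\Omega]$ into $[\widehat{B}_\varphi]$ without changing the determinant. The final appeal to $\mathrm{Pf}(M)^2=\det(M)$ then closes the chain of equivalences. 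In short, your argument supplies precisely what the paper omits by citation.
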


Using Theorem~\ref{thm:det},  we are able to establish the following structural results related to contact Lie algebras which will be crucial in what follows. Let $Z(\mathfrak{g})$ denote the center of a Lie algebra $\mathfrak{g}$.

\begin{theorem}\label{lem:commute}
Let $\mathfrak{g}$ be a Lie algebra. If $\ind\mathfrak{g}=1$ and $\dim Z(\mathfrak{g})>0$, then $\mathfrak{g}$ is contact.
\end{theorem}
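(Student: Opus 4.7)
The plan is to apply Theorem~\ref{thm:det} by exhibiting $\varphi \in \mathfrak{g}^*$ for which $\det\!\bigl([\widehat{B}_\varphi]\bigr) \neq 0$.

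First I would record three preliminary observations. Since $[B_\varphi]$ is skew-symmetric its rank is even, so $\ind\mathfrak{g} = 1$ forces $\dim\mathfrak{g} = 2k+1$ to be odd and Theorem~\ref{thm:det} applies. Next, because $B_\varphi(z,y) = \varphi([z,y]) = 0$ whenever $z \in Z(\mathfrak{g})$, we have $Z(\mathfrak{g}) \subseteq \ker B_\varphi$ for every $\varphi$; in particular $\dim Z(\mathfrak{g}) \leq \ind\mathfrak{g} = 1$. Combined with $\dim Z(\mathfrak{g}) > 0$, this gives $\dim Z(\mathfrak{g}) = 1$; fix a generator $z$. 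Finally, for any regular $\varphi$, $\ker B_\varphi$ is one-dimensional and contains $z$, so $\ker B_\varphi = \langle z \rangle$.

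Next, I would choose an ordered basis $\mathscr{B}(\mathfrak{g}) = \{E_1, \dots, E_{2k}, E_{2k+1}\}$ with $E_{2k+1} = z$. Centrality of $z$ forces the last row and last column of $[B_\varphi]$ to vanish, so the corresponding border row and border column of $[\widehat{B}_\varphi]$ each have exactly one nonzero entry, namely $\pm\varphi(z)$. A double Laplace expansion, first along the bottom row of $[\widehat{B}_\varphi]$ and then along the new last column of the resulting minor, collapses the determinant to
$$\det\!\bigl([\widehat{B}_\varphi]\bigr) \;=\; \varphi(z)^2 \cdot \det(M_\varphi),$$
where $M_\varphi$ is the top-left $2k \times 2k$ principal submatrix of $[B_\varphi]$, i.e., the matrix of $B_\varphi$ restricted to $\langle E_1,\dots,E_{2k}\rangle$.

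Finally, I would choose $\varphi$ so that both factors are nonzero. For any regular $\varphi$ one has $\text{rank}([B_\varphi]) = 2k$, and since the bottom row and rightmost column of $[B_\varphi]$ already vanish, the entire nonzero content of $[B_\varphi]$ is concentrated in $M_\varphi$, forcing $\det(M_\varphi) \neq 0$. It remains to produce a regular $\varphi$ with $\varphi(z) \neq 0$. The regular locus is a nonempty Zariski-open subset of $\mathfrak{g}^*$, and the hyperplane complement $\{\varphi : \varphi(z) \neq 0\}$ is also Zariski-open and nonempty; since $\mathfrak{g}^*$ is irreducible their intersection is nonempty, and any $\varphi$ in it satisfies $\det\!\bigl([\widehat{B}_\varphi]\bigr) \neq 0$. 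The only subtle step is this density argument; if one prefers to avoid it, the same conclusion follows by perturbation: if $\varphi_0$ is regular with $\varphi_0(z)=0$ and $z^* \in \mathfrak{g}^*$ satisfies $z^*(z)=1$, then $\varphi_t := \varphi_0 + t z^*$ remains regular for all sufficiently small $t \neq 0$ while $\varphi_t(z) = t \neq 0$.
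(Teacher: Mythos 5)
Your proposal is correct and follows essentially the same route as the paper: produce a regular $\varphi$ with $\varphi(z)\neq 0$, then expand $\det\bigl([\widehat{B}_\varphi]\bigr)$ along the row and column indexed by $z$ to obtain $\varphi(z)^2\det(M_\varphi)\neq 0$. The only cosmetic difference is that the paper realizes the existence of such a $\varphi$ by treating $\varphi'(z)$ as a variable and noting the resulting determinant is a nonzero polynomial with finitely many roots, which is the same perturbation idea you describe.
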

\begin{proof}
We first establish notation that we will use in this proof -- and ongoing.  

\bigskip
\noindent
\textit{Notation:} Given a functional $\varphi\in \mathfrak{g}^*$. We refer to the first column \textup(resp., row\textup) of $\left[\widehat{B}_{\varphi}\right]$ as column \textup(resp., row\textup) $\mathbf{I}$ and the column \textup(resp., row\textup) of $\left[\widehat{B}_{\varphi}\right]$ corresponding to the column \textup(resp., row\textup) of $\widehat{C}(\mathfrak{g},\mathscr{B}(\mathfrak{g}))$ with first entry $b_i$ (resp., $-b_i$) as column \textup(resp., row\textup) $\mathbf{b_i}$.

\bigskip
Let $z\in Z(\mathfrak{g})$. We claim that there exists a regular $\varphi\in\mathfrak{g}^*$ for which $\varphi(z)\neq 0$. To see this, extend $z$ to a basis $\mathscr{B}(\mathfrak{g})$ of $\mathfrak{g}$, and note that for any $\varphi'\in\mathfrak{g}^*$ the row corresponding to $z$ in $[B_{\varphi'}]=\varphi'(C(\mathfrak{g},\mathscr{B}(\mathfrak{g})))$ is the zero row. Thus, since $\ind\mathfrak{g}=1$, if $[B'_{\varphi'}]$ denotes the matrix formed by removing the row and column of $[B_{\varphi'}]$ corresponding to $z$, then $\det \left(\left[B'_{\varphi'}\right]\right)\neq 0$ if and only if $\varphi'$ is a regular one-form on $\mathfrak{g}$. Fix a regular $\varphi'\in\mathfrak{g}^*,$ and replace $\varphi'(z)$ in $\left[B'_{\varphi'}\right]$ with a variable $x.$ Then $\det\left(\left[B'_{\varphi'}\right]\right)$ is a nonzero polynomial $p(x).$ Since $\mathbf{k}$ is an infinite field and $p(x)$ only has a finite number of zeros, there exists a nonzero choice $v\in \mathbf{k}$ for which $p(v)\neq 0$. Let $\varphi\in\mathfrak{g}^*$ be defined by $\varphi(b)=\varphi'(b)$, for all $b\in \mathscr{B}(\mathfrak{g})\backslash\{z\}$, and $\varphi(z)=v\neq 0$. By construction, $\det \left(\left[B'_{\varphi}\right]\right)\neq 0$ and so $\varphi$ is regular, establishing the claim. Now, consider $\det\left(\left[\widehat{B}_{\varphi}\right]\right)$. Expanding the determinant along row $\mathbf{z}$ followed by column $\mathbf{z},$ we find that $$\det\left(\left[\widehat{B}_{\varphi}\right]\right) = \varphi(z)^2\det \left(\left[B'_{\varphi}\right]\right)\neq 0.$$ An application of Theorem~\ref{thm:det} establishes that $\mathfrak{g}$ is contact with contact form $\varphi$.
\end{proof}

The final result of this section is specific to contact, type-A Lie poset algebras. A Hasse diagram corresponding to a poset has upward orientation and contains no directed cycles. However, in an abuse of terminology, we will frequently use ``cycles" in a Hasse diagram to describe paths in the undirected Hasse diagram which begin and end at the same vertex. See Example~\ref{ex:cycle}.

\begin{Ex}\label{ex:cycle}
Consider the Hasse diagrams of the following posets: $\mathcal{P}_1=\{1,2,3,4,5,6\}$ with relations $1\prec 3,4\prec 5$ and $2\prec 4\prec 6$ \textup(Figure \ref{fig:cycles} left\textup), and $\mathcal{P}_2=\{1,2,3,4\}$ with relations $1,2\prec 3,4$ \textup(Figure \ref{fig:cycles} right\textup).
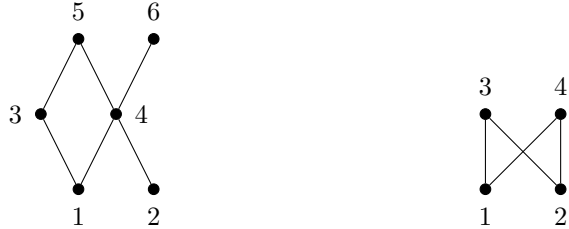
\begin{figure}[H]\label{Hasse}
$$\begin{tikzpicture}
\node (1) at (0,0) [circle, draw = black, fill = black, inner sep = 0.5mm] {};
\node (2) at (1,0) [circle,draw=black, fill=black, inner sep=0.5mm]{};
\node (3) at (-0.5,1) [circle,draw=black, fill=black, inner sep=0.5mm]{};
\node (4) at (0.5,1) [circle,draw=black, fill=black, inner sep=0.5mm]{};
\node (5) at (0,2) [circle,draw=black, fill=black, inner sep=0.5mm]{};
\node (6) at (1,2) [circle,draw=black, fill=black, inner sep=0.5mm]{};
\node[label=below:{$1$}] at (0,0){};
\node[label=below:{$2$}] at (1,0){};
\node[label=left:{$3$}] at (-0.5,1){};
\node[label=right:{$4$}] at (0.5,1){};
\node[label=above:{$5$}] at (0,2){};
\node[label=above:{$6$}] at (1,2){};
\draw (1)--(3);
\draw (1)--(4);
\draw (3)--(5);
\draw (4)--(5);
\draw (4)--(6);
\draw (2)--(4);
\end{tikzpicture}
\hspace{4cm}
\begin{tikzpicture}
\node (1) at (0,0) [circle, draw = black, fill = black, inner sep = 0.5mm] {};
\node (2) at (1,0) [circle,draw=black, fill=black, inner sep=0.5mm]{};
\node (3) at (0,1) [circle,draw=black, fill=black, inner sep=0.5mm]{};
\node (4) at (1,1) [circle,draw=black, fill=black, inner sep=0.5mm]{};
\node[label=above:{$4$}] at (1,1){};
\node[label=above:{$3$}] at (0,1){};
\node[label=below:{$2$}] at (1,0){};
\node[label=below:{$1$}] at (0,0){};
\draw (1)--(3);
\draw (1)--(4);
\draw (2)--(3);
\draw (2)--(4);
\end{tikzpicture}$$
    \caption{$\mathcal{P}_1$ contains cycle $(1,3,5,4)$ and $\mathcal{P}_2$ contains cycle $(1,4,2,3)$}
    \label{fig:cycles}
\end{figure}
\end{Ex}

Using the above notion of cycles, we have the following obstruction theorem.

\begin{theorem}\label{lem:cycle}
If the Hasse diagram of $\mathcal{P}_{Ext(\mathcal{P})}$ contains a cycle, then $\mathfrak{g}_A(\mathcal{P})$ is not contact.
\end{theorem}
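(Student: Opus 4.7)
The plan is to show that $\det[\widehat{B}_\varphi]=0$ for every $\varphi \in \mathfrak{g}_A(\mathcal{P})^*$, and then conclude via Theorem~\ref{thm:det}. Because $\mathcal{P}_{Ext(\mathcal{P})}$ is bipartite (every cover joins a minimum to a maximum of $\mathcal{P}$), any cycle has even length, and I will fix a simple such cycle $a_1\prec b_1\succ a_2\prec b_2\succ \cdots \prec b_k\succ a_1$ with the $a_i$'s minimal and the $b_i$'s maximal in $\mathcal{P}$. To each of the $2k$ edges of the cycle I attach a basis element of $\mathfrak{g}_A(\mathcal{P})$: set $v_i=e_{a_i,b_i}$ and $w_i=e_{a_{i+1},b_i}$ (indices mod $k$), and write $\alpha_i=\varphi(v_i)$, $\beta_i=\varphi(w_i)$.

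The structural observation is that extremality of the $a_i$'s and $b_i$'s forces $[v_i,e_{c,d}]=0$ and $[w_i,e_{c,d}]=0$ whenever $e_{c,d}$ is an off-diagonal basis element of $\mathfrak{g}_A(\mathcal{P})$. Indeed, $[e_{a_i,b_i},e_{c,d}]=\delta_{b_i,c}\,e_{a_i,d}-\delta_{d,a_i}\,e_{c,b_i}$, and $c=b_i$ combined with $c\prec d$ and the maximality of $b_i$ forces $c=d$, while $d=a_i$ combined with $c\prec d$ and the minimality of $a_i$ likewise forces $c=d$, so any surviving bracket must land on a diagonal basis element. It follows that every nonzero entry of the $v_i$-row of $[\widehat{B}_\varphi]$ occurs either in column $\mathbf{I}$ (where the entry is $-\alpha_i$) or in a Cartan column, and in each case factors as $\alpha_i$ times an integer depending only on the combinatorics of $\mathcal{P}$; the analogous statement holds for the $w_i$-row with coefficient $\beta_i$.

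If any $\alpha_i$ or $\beta_i$ vanishes, the corresponding row is identically zero and the matrix is singular, so assume all are nonzero. Fixing a trace-zero Cartan basis of the form $h_c=H_c-H_{n_0}$ and using $[e_{i,j},H_c]=(\delta_{j,c}-\delta_{i,c})\,e_{i,j}$, I take the combination
\[
\sum_{i=1}^k \frac{1}{\alpha_i}\,(\text{row } v_i)\;-\;\sum_{i=1}^k \frac{1}{\beta_i}\,(\text{row } w_i).
\]
The column-$\mathbf{I}$ entry of this combination is $-k-(-k)=0$; the column-$h_c$ entry reduces to $\sum_{i=1}^k\bigl[(\delta_{a_{i+1},c}-\delta_{a_i,c})+(\delta_{a_i,n_0}-\delta_{a_{i+1},n_0})\bigr]$, both of whose pieces telescope around the cycle to $0$; and every remaining entry is $0$ by the restricted-support claim of the previous paragraph. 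Thus the rows of $[\widehat{B}_\varphi]$ are linearly dependent, $\det[\widehat{B}_\varphi]=0$, and $\varphi$ cannot be a contact form. The main (and really the only) subtle point is verifying the vanishing of $[v_i,e]$ and $[w_i,e]$ on every off-diagonal basis element $e$; this uses extremality of the $a_i$'s and $b_i$'s in an essential way and is precisely what makes a cycle in $\mathcal{P}_{Ext(\mathcal{P})}$ (as opposed to one that passes through a nonextremal vertex) an obstruction to the contact property.
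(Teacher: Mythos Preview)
Your proof is correct and follows essentially the same approach as the paper: both arguments exploit that $e_{a,b}$ with $a$ minimal and $b$ maximal commutes with every off-diagonal basis element, so the corresponding rows of $[\widehat{B}_\varphi]$ live entirely in the Cartan columns and column $\mathbf{I}$ and are proportional to $\varphi(e_{a,b})$; after noting that any vanishing coefficient already kills the determinant, both proofs exhibit the alternating combination $\sum \frac{1}{\varphi(e_{a,b})}(\text{row }e_{a,b})$ over the cycle edges as a nontrivial linear dependence. The only cosmetic difference is that the paper verifies the Cartan-column cancellation via a four-case analysis (depending on whether the distinguished index and $c$ lie on the cycle), whereas you collapse this into a single telescoping identity, which is a bit cleaner.
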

\begin{proof}
Assume, for a contradiction, that $\mathfrak{g}=\mathfrak{g}_A(\mathcal{P})$ is contact and that $\mathcal{P}_{Ext(\mathcal{P})}$ contains a cycle $\Gamma$ with edge set $E=\{e_1,\dots,e_n\}$. Let $\varphi\in \mathfrak{g}^*$ be a contact form on $\mathfrak{g}$, $$\mathscr{B}(\mathfrak{g})=\{E_{1,1}-E_{p,p}~|~1\neq p\in\mathcal{P}\}\cup\{E_{p,q}~|~p\prec q\},$$ and $\left[\widehat{B}_{\varphi}\right]=\varphi\left(\widehat{C}(\mathfrak{g},\mathscr{B}(\mathfrak{g}))\right)$. By assumption, $\det\left(\left[\widehat{B}_{\varphi}\right]\right)\neq 0$. Partition $E$ into two sets $$E_1 = \{e_i~|~i\text{ odd}\}\quad{ and }\quad E_2 = \{e_i~|~i\text{ even}\}.$$ 

\noindent
\textit{Observation}. Basis elements of the form $E_{i,j}$, for $(i,j)\in E$, commute with all basis elements of the form $E_{p,q}$, for $p,q\in\mathcal{P}$ satisfying $p\prec q$. Thus, such rows have nonzero entries only in columns of the form $\mathbf{I}$ and $\mathbf{E_{1,1} - E_{k,k}}$; in particular, all nonzero entries must be $\pm\varphi(E_{i,j})$ or $\pm2\varphi(E_{i,j})$. Hence, as $\det\left(\left[\widehat{B}_{\varphi}\right]\right)$ is assumed to be nonzero, it follows that $\varphi(E_{i,j})\neq 0$, for all $(i,j)\in E$.

\bigskip
Now, consider the following linear combination of rows of $\left[\widehat{B}_{\varphi}\right]$: $$\mathscr{L}=\sum_{(i,j)\in E_1}\frac{1}{\varphi(E_{i,j})}\mathbf{E_{i,j}} - \sum_{(i,j)\in E_2}\frac{1}{\varphi(E_{i,j})}\mathbf{E_{i,j}}.$$ We claim that $\mathscr{L}$ is equal to the zero-row.

To establish the claim, in light of the observation above, it suffices to consider the entries of $\mathscr{L}$ in columns of the form $\mathbf{I}$ and $\mathbf{E_{1,1} - E_{k,k}}$. There are four cases.
\\*

\noindent
\textbf{Case 1}: Column $\mathbf{I}$. Recall that column $\mathbf{I}$ has an entry of $-\varphi(E_{p,q})$ in row $\mathbf{E_{p,q}}$, for $p,q\in\mathcal{P}$ satisfying $p\prec q$. Thus, $\mathscr{L}$ has an entry of 
\begin{equation}\label{caseI}
\sum_{(i,j)\in E_2}1-\sum_{(i,j)\in E_1}1=|E_2|-|E_1|
\end{equation}
in column $\mathbf{I}$. Since $\mathcal{P}_{Ext(\mathcal{P})}$ is a bipartite graph, $\Gamma$ is an even cycle. Therefore, $|E_1|=|E_2|,$ so the difference in (\ref{caseI}) is equal to 0.
\\*

\noindent
\textbf{Case 2}: Columns of the form $\mathbf{E_{1,1} - E_{k,k}}$ where $(1,k)$ is in $E$. In this case, row $\mathbf{E_{1,k}}$ contributes $\pm 2$ to column $\mathbf{E_{1,1} - E_{k,k}}$ of $\mathscr{L}$. Further, since $(1,k)$ is an edge of $\Gamma$, it must be adjacent to two other edges of $\Gamma$, say $(i,k)$ and $(1,j)$. Note that rows $\mathbf{E_{i,k}}$ and $\mathbf{E_{1,j}}$ both contribute $\mp 1$ to column $\mathbf{E_{1,1} - E_{k,k}}$ of $\mathscr{L}$. As no other rows involved in $\mathscr{L}$ have nonzero entries in column $\mathbf{E_{1,1}-E_{k,k}}$, we conclude that $\mathscr{L}$ has an entry of 0 in this column.
\\*

\noindent
\textbf{Case 3}: Columns of the form $\mathbf{E_{1,1} - E_{k,k}}$ where $1$ or $k$ defines a vertex of $\Gamma$, but $(1,k)$ is not an edge of $\Gamma$. Without loss of generality, assume that $k$ is maximal in $\mathcal{P}$ and defines a vertex of $\Gamma$. Then $k$ must be contained in exactly two edges of $\Gamma$, say edges $(i,k)$ and $(j,k)$. Without loss of generality, assume $(i,k)\in E_1$ and $(j,k)\in E_2.$ In this case, row $\mathbf{E_{i,k}}$ contributes $-1$ and row $\mathbf{E_{j,k}}$ contributes $1$ to column $\mathbf{E_{1,1} - E_{k,k}}$ of $\mathscr{L}$. As no other rows involved in $\mathscr{L}$ have nonzero entries in column $\mathbf{E_{1,1}-E_{k,k}}$, we conclude that $\mathscr{L}$ has an entry of 0 in this column.
\\*

\noindent
\textbf{Case 4}: Columns of the form $\mathbf{E_{1,1} - E_{k,k}}$ where neither $1$ nor $k$ defines a vertex in $\Gamma$. In this case, all rows involved in $\mathscr{L}$ have an entry of 0 in column $\mathbf{E_{1,1} - E_{k,k}}$.
\\*

\noindent
Thus, the claim is established. Consequently, $\det\left(\left[\widehat{B}_{\varphi}\right]\right)=0$, a contradiction. The result follows.
\end{proof}

\begin{Ex}\label{ex:cyclesproof}
Consider the poset $\mathcal{P}_1$ described in Example~\ref{ex:cycle}. The Hasse diagram of $\mathcal{P}_{Ext(\mathcal{P}_1)}$ is exactly the Hasse diagram of $\mathcal{P}_2$ in Example~\ref{ex:cycle}, with a relabeling of the vertices. Therefore, since $\mathcal{P}_2$ contains a cycle, Theorem~\ref{lem:cycle} implies that $\mathfrak{g}_A(\mathcal{P}_1)$ is not contact.
\end{Ex}

\section{Combinatorial classification }\label{sec:mainresults}
In this section we establish the first of our two main results which is the combinatorial classification of posets of height at most two which generate contact, type-A Lie poset algebras (``contact posets'').  In Section \ref{sec:01dis2}, the classification is given for height-zero, height-one, and disconnected, height-two posets (see Theorem \ref{thm:h1+dh2}). The more substantive connected, height-two case is treated in Section~\ref{sec:contwo} (see Theorem~\ref{thm:contact}).

\subsection{Height zero, height one, and disconnected, height two}\label{sec:01dis2}

\begin{theorem}\label{thm:h1+dh2}
Let $\mathcal{P}$ be a height-zero, height-one, or disconnected, height-two poset. Then $\mathfrak{g}_A(\mathcal{P})$ is contact if and only if
\begin{itemize}
    \item $|Ext(\mathcal{P}_{\{j\in\mathcal{P}~|~i\preceq j\text{ or }j\preceq i\}})|=3$ for all $i\in\mathcal{P}\backslash Ext(\mathcal{P})$ and
    \item the Hasse diagram of $\mathcal{P}_{Ext(\mathcal{P})}$ consists of two disjoint trees.
\end{itemize}
\end{theorem}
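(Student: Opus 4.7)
The plan is to treat all three cases uniformly by first extracting $\ind\mathfrak{g}_A(\mathcal{P})=1$ from the combinatorial hypotheses via Theorem \ref{thm:gind}, and then producing a contact form through Theorem \ref{lem:commute}. The latter is available because ``two disjoint trees'' will force $C_\mathcal{P}=2$, and a direct inspection of the bracket on $\mathfrak{g}_A(\mathcal{P})$ shows that $Z(\mathfrak{g}_A(\mathcal{P}))$ is spanned by trace-zero diagonal matrices that are constant on each connected component of $\mathcal{P}$, so $\dim Z(\mathfrak{g}_A(\mathcal{P}))=C_\mathcal{P}-1\ge 1$ whenever the hypothesis holds.

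For the ``only if'' direction, Theorem \ref{lem:cycle} forces $\mathcal{P}_{Ext(\mathcal{P})}$ to be acyclic, hence a forest. In height zero $\mathfrak{g}_A(\mathcal{P})$ is abelian of dimension $|\mathcal{P}|-1$, and an abelian Lie algebra is contact only in dimension one; this forces $|\mathcal{P}|=2$, which is exactly the two-isolated-vertex-trees condition (the first bullet is vacuous). In height one $\mathcal{P}_{Ext(\mathcal{P})}=\mathcal{P}$ and $\mathcal{P}\setminus Ext(\mathcal{P})=\emptyset$, so Theorem \ref{thm:gind} combined with $|Rel(\mathcal{P})|=|\mathcal{P}|-C_\mathcal{P}$ (forest) collapses to $\ind\mathfrak{g}_A(\mathcal{P})=C_\mathcal{P}-1$; requiring this to equal $1$ gives $C_\mathcal{P}=2$ and hence two disjoint trees.

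The substantive case is disconnected height two. Write $m=|\mathcal{P}\setminus Ext(\mathcal{P})|$, $e=|Ext(\mathcal{P})|$, and let $C'$ be the number of components of $\mathcal{P}_{Ext(\mathcal{P})}$. From the definition, $UD(\mathcal{P},j)\ge 1$ with equality if and only if $D(\mathcal{P},j)+U(\mathcal{P},j)=3$, which (since a middle element is not in $Ext$ of its comparable-induced subposet) is exactly the first bullet. The key combinatorial observation, which I would verify separately, is that in a height-at-most-two poset $C_\mathcal{P}=C'$: each middle element of $\mathcal{P}$ is covered by some maximum and covers some minimum, so each component of $\mathcal{P}$ contains at least one element of $Ext(\mathcal{P})$, and paths in the Hasse diagram of $\mathcal{P}$ through middle elements project to walks in $\mathcal{P}_{Ext(\mathcal{P})}$ while each edge of $\mathcal{P}_{Ext(\mathcal{P})}$ lifts to a length-two chain through a middle element. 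Substituting $|Rel_E(\mathcal{P})|=e-C_\mathcal{P}$ (acyclicity) and $|\mathcal{P}|=m+e$ into Theorem \ref{thm:gind} rearranges to
$$\ind\mathfrak{g}_A(\mathcal{P})=(C_\mathcal{P}-1)+\sum_{j\in\mathcal{P}\setminus Ext(\mathcal{P})}\bigl(UD(\mathcal{P},j)-1\bigr).$$
Both summands are non-negative, so $\ind\mathfrak{g}_A(\mathcal{P})=1$ together with disconnectedness ($C_\mathcal{P}\ge 2$) forces $C_\mathcal{P}=2$ and $UD(\mathcal{P},j)=1$ for every non-extremal $j$, which is precisely the pair of bulleted conditions.

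For the ``if'' direction the same identity shows $\ind\mathfrak{g}_A(\mathcal{P})=1$ whenever the two bullets hold, and since then $C_\mathcal{P}=2$ the center $Z(\mathfrak{g}_A(\mathcal{P}))$ is one-dimensional and hence nonzero, so Theorem \ref{lem:commute} supplies a contact form. I expect the main technical hurdle to be the identification $C_\mathcal{P}=C'$ in the height-two argument; once that is in hand, everything else is bookkeeping with Theorem \ref{thm:gind}.
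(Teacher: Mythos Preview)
Your argument is correct in substance, but it is organized differently from the paper's proof. The paper handles the disconnected case by first showing that a disconnected $\mathcal{P}$ of height at most two has $\ind\mathfrak{g}_A(\mathcal{P})=1$ if and only if $\mathcal{P}$ is a disjoint sum of two Frobenius posets, exhibits the explicit central element $|\mathcal{P}_2|\sum_{i\in\mathcal{P}_1}E_{i,i}-|\mathcal{P}_1|\sum_{j\in\mathcal{P}_2}E_{j,j}$ to invoke Theorem~\ref{lem:commute}, and then quotes the Frobenius classification (Theorem~\ref{thm:h2frob}) to read off the two bulleted conditions. You instead unpack Theorem~\ref{thm:gind} directly into the identity $\ind\mathfrak{g}_A(\mathcal{P})=(C_\mathcal{P}-1)+\sum_j\bigl(UD(\mathcal{P},j)-1\bigr)$, valid once $\mathcal{P}_{Ext(\mathcal{P})}$ is a forest and $C_\mathcal{P}=C'$. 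This is more self-contained and avoids importing Theorem~\ref{thm:h2frob}; the cost is that you must establish $C_\mathcal{P}=C'$, which the paper's route never needs.

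One genuine gap to close: your assertion that $UD(\mathcal{P},j)=1$ if and only if $D(\mathcal{P},j)+U(\mathcal{P},j)=3$ does \emph{not} follow ``from the definition'' alone---for instance $(D,U)=(2,3)$ gives $UD=1$ but $D+U=5$. What makes the equivalence true in your setting is the acyclicity of $\mathcal{P}_{Ext(\mathcal{P})}$ that you already obtained from Theorem~\ref{lem:cycle}: if $\min(D,U)\ge 2$ for some middle element $j$, then the extremal elements comparable to $j$ contain a $K_{2,2}$, hence a $4$-cycle, in the Hasse diagram of $\mathcal{P}_{Ext(\mathcal{P})}$. So acyclicity forces $\min(D,U)=1$, whence $UD=\max(D,U)-1$ and $UD=1\Leftrightarrow D+U=3$. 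Insert this observation and your proof is complete.
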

\begin{proof}
The height-zero case is immediate. To start, we prove the result for disconnected posets of height at most two. This is accomplished by showing that a disconnected poset corresponds to a contact, type-A Lie poset algebra if and only if it is a disjoint sum of two Frobenius posets. As a result of Theorem~\ref{thm:gind}, a type-A Lie poset algebra corresponding to a disconnected poset $\mathcal{P}$ has index one if and only if $\mathcal{P}$ is the disjoint sum of two Frobenius posets. Therefore, if $\mathcal{P}$ is a disconnected poset associated to a contact, type-A Lie poset algebra, then $\mathcal{P}$ is the disjoint sum of two Frobenius posets. To get the other direction, let $\mathcal{P}$ be the disjoint sum of two Frobenius posets $\mathcal{P}_1$ and $\mathcal{P}_2$. Then $$|\mathcal{P}_2|\sum_{i\in\mathcal{P}_1}E_{i,i}-|\mathcal{P}_1|\sum_{j\in\mathcal{P}_2}E_{j,j}\in Z(\mathfrak{g}_A(\mathcal{P})).$$ Thus, Theorem~\ref{lem:commute} implies that $\mathfrak{g}_A(\mathcal{P})$ is contact.
The result for disconnected posets of heights one and two now follows upon applying Theorem~\ref{thm:h2frob}. 

To establish the result, it now suffices to show that if $\mathcal{P}$ is a height-one poset for which $\mathfrak{g}_A(\mathcal{P})$ is contact, then $\mathcal{P}$ must be disconnected. Assume otherwise. Since $\mathfrak{g}_A(\mathcal{P})$ is contact, it has index equal to one. Theorem~\ref{thm:gind} implies that the Hasse diagram of $\mathcal{P}$ contains $|\mathcal{P}|$ edges and vertices. Since $\mathcal{P}$ is connected, the Hasse diagram must contain a cycle. However, as $\mathcal{P}=\mathcal{P}_{Ext(\mathcal{P})}$ for height-one posets, this is a contradiction considering Lemma~\ref{lem:cycle}. The result follows.
\end{proof}

\begin{Ex}\label{ex:h2dis}
The disconnected, height-two poset $\mathcal{P}$ with the Hasse diagram in Figure~\ref{fig:h2dis} is contact by Theorem~\ref{thm:h1+dh2}.
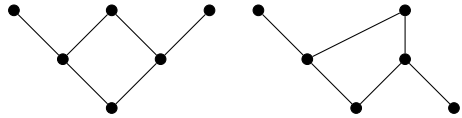
\begin{figure}[H]
$$\begin{tikzpicture}[scale=0.65]
\node (1) at (0,0) [circle, draw = black, fill = black, inner sep = 0.5mm] {};
\node (2) at (-1,1) [circle,draw=black,fill=black,inner sep=0.5mm]{};
\node (3) at (1,1)[circle,draw=black,fill=black,inner sep=0.5mm]{};
\node (4) at (-2,2)[circle,draw=black,fill=black,inner sep=0.5mm]{};
\node (5) at (0,2)[circle,draw=black,fill=black,inner sep=0.5mm]{};
\node (6) at (2,2)[circle,draw=black,fill=black,inner sep=0.5mm]{};
\node (7) at (5,0)[circle,draw=black,fill=black,inner sep=0.5mm]{};
\node (8) at (7,0)[circle,draw=black,fill=black,inner sep=0.5mm]{};
\node (9) at (4,1) [circle,draw=black,fill=black,inner sep=0.5mm]{};
\node (10) at (6,1) [circle,draw=black,fill=black,inner sep=0.5mm]{};
\node (11) at (3,2) [circle,draw=black,fill=black,inner sep=0.5mm]{};
\node (12) at (6,2)[circle,draw=black,fill=black,inner sep=0.5mm]{};
\draw (1)--(2)--(5);
\draw (2)--(4);
\draw (1)--(3)--(5);
\draw (3)--(6);
\draw (7)--(9)--(11);
\draw (9)--(12);
\draw (7)--(10)--(12);
\draw (8)--(10);
\end{tikzpicture}$$
    \caption{Hasse diagram of $\mathcal{P}$}
    \label{fig:h2dis}
\end{figure}
\end{Ex}

\subsection{Connected, height two}\label{sec:contwo}
To begin, we describe a recipe for the iterative construction of a sequence of posets via ``building blocks" and ``gluing rules". See the construction sequence below. We proceed by narrowing the collections of building blocks (Theorems \ref{thm:blocks} and \ref{lem:113}) and gluing rules (Theorem \ref{lem:contactsequence}) so that the limiting poset is contact. The resulting sequence is called a contact sequence. This section culminates with the characterization of contact, type-A Lie poset algebras  corresponding to connected, height-two posets as exactly those whose poset is the inductive limit of a contact sequence (Theorem \ref{thm:contact}).
\bigskip

\newpage

\begin{tcolorbox}[breakable, enhanced]
\centerline{CONSTRUCTION SEQUENCE}
\bigskip

Let $\mathcal{P}$ be a connected, height-two poset. Then each $i\in\mathcal{P}\backslash Ext(\mathcal{P})$ defines a poset $$\mathcal{P}^i=\mathcal{P}_{\{j\in\mathcal{P}~|~i\preceq j\text{ or }j\preceq i\}}$$ of the form $\mathcal{P}(n_i,1,m_i)$, where $n_i=D(\mathcal{P},i)$ and $m_i=U(\mathcal{P},i)$. Denote the set of such posets corresponding to elements of $\mathcal{P}\backslash Ext(\mathcal{P}),$ along with posets of the form $\mathcal{P}(1,1)$ corresponding to covering relations between elements of $Ext(\mathcal{P}),$ by $S$. Starting from any $\mathcal{S}_0\in S$, it is possible to form a sequence of posets $$\mathcal{S}_0=\mathcal{P}_0\subset\mathcal{P}_1\subset\hdots\subset\mathcal{P}_k=\mathcal{P},$$ where $\mathcal{P}_j$ is connected, for $j=0,\hdots,k$, and $\mathcal{P}_{j}$ is formed from $\mathcal{P}_{j-1}$ and $\mathcal{S}_{j}\in S\backslash\{\mathcal{S}_{0},\hdots,\mathcal{S}_{j-1}\}$ by identifying pairs of maximal \textup(resp., minimal\textup) elements of each, for $j=1,\hdots,k$. Such a ``gluing process" is illustrated in Example~\ref{ex:npurefrob}. In the proof of Lemma 1 in \textup{\textbf{\cite{seriesA}}}, it is shown that 
\begin{equation}\label{eqn:indinc}
\ind\mathfrak{g}_A(\mathcal{P}_0)\le \ind\mathfrak{g}_A(\mathcal{P}_1)\le\hdots\le \ind\mathfrak{g}_A(\mathcal{P}_k).
\end{equation}

\begin{Ex}\label{ex:npurefrob}
A height-two poset $\mathcal{P},$ along with the construction of $\mathcal{P}$, as outlined in the construction sequence, is illustrated in Figure~\ref{fig:Frobnonpure}.
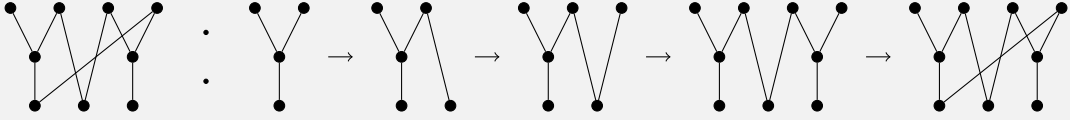
\begin{figure}[H]
$$\begin{tikzpicture}[scale=0.65]
\node [circle, draw = black, fill = black, inner sep = 0.5mm] (v1) at (-2,-0.5) {};
\node (v5) at (-1,-0.5) [circle, draw = black, fill = black, inner sep = 0.5mm] {};
\node (v3) at (-1,0.5) [circle, draw = black, fill = black, inner sep = 0.5mm] {};
\node (v2) at (-1.5,1.5) [circle, draw = black, fill = black, inner sep = 0.5mm] {};
\node [circle, draw = black, fill = black, inner sep = 0.5mm] (v4) at (-0.5,1.5) {};
\draw (v1) -- (v2) -- (v3)--(v4);
\draw (v5) -- (v3);
\node (v6) at (-2.5,1.5) [circle, draw = black, fill = black, inner sep = 0.5mm] {};
\draw (v6) -- (v1);
\node (v7) at (-3,-0.5) [circle, draw = black, fill = black, inner sep = 0.5mm] {};
\node (v8) at (-3,0.5) [circle, draw = black, fill = black, inner sep = 0.5mm] {};
\node (v9) at (-3.5,1.5) [circle, draw = black, fill = black, inner sep = 0.5mm] {};
\draw (v7) -- (v8) -- (v9);
\draw (v8) -- (v6);
\draw (v7) -- (v4);

\node [circle, draw = black, fill = black, inner sep = 0.2mm] at (0.5,1) {};
\node [circle, draw = black, fill = black, inner sep = 0.2mm] at (0.5,0) {};
\node [circle, draw = black, fill = black, inner sep = 0.5mm] (v30) at (1.5,1.5) {};
\node [circle, draw = black, fill = black, inner sep = 0.5mm] (v31) at (2.5,1.5) {};
\node [circle, draw = black, fill = black, inner sep = 0.5mm] (v29) at (2,0.5) {};
\node [circle, draw = black, fill = black, inner sep = 0.5mm] (v28) at (2,-0.5) {};
\draw (v28) -- (v29) -- (v30);
\draw (v29) -- (v31);
\draw[->] (3,0.5) -- (3.5,0.5);
\node [circle, draw = black, fill = black, inner sep = 0.5mm] (v34) at (7,1.5) {};
\node [circle, draw = black, fill = black, inner sep = 0.5mm] (v35) at (8,1.5) {};
\node [circle, draw = black, fill = black, inner sep = 0.5mm] (v33) at (7.5,0.5) {};
\node [circle, draw = black, fill = black, inner sep = 0.5mm] (v32) at (7.5,-0.5) {};
\node [circle, draw = black, fill = black, inner sep = 0.5mm] (v36) at (8.5,-0.5) {};
\node [circle, draw = black, fill = black, inner sep = 0.5mm] (v37) at (9,1.5) {};
\draw (v32) -- (v33) -- (v34) -- cycle;
\draw (v33) -- (v35) -- (v36) -- (v37);
\draw[->] (9.5,0.5) -- (10,0.5);
\node [circle, draw = black, fill = black, inner sep = 0.5mm] (v45) at (10.5,1.5) {};
\node [circle, draw = black, fill = black, inner sep = 0.5mm] (v40) at (11.5,1.5) {};
\node [circle, draw = black, fill = black, inner sep = 0.5mm] (v39) at (11,0.5) {};
\node [circle, draw = black, fill = black, inner sep = 0.5mm] (v38) at (11,-0.5) {};
\node [circle, draw = black, fill = black, inner sep = 0.5mm] (v41) at (12,-0.5) {};
\node [circle, draw = black, fill = black, inner sep = 0.5mm] (v42) at (12.5,1.5) {};
\node [circle, draw = black, fill = black, inner sep = 0.5mm] (v46) at (13.5,1.5) {};
\node [circle, draw = black, fill = black, inner sep = 0.5mm] (v43) at (13,0.5) {};
\node [circle, draw = black, fill = black, inner sep = 0.5mm] (v44) at (13,-0.5) {};
\draw (v38) -- (v39) -- (v40) -- (v41) -- (v42) -- (v43) -- (v44);
\draw (v39) -- (v45);
\draw (v43) -- (v46);
\node (v14) at (4,1.5)  [circle, draw = black, fill = black, inner sep = 0.5mm] {};
\node (v12) at (5,1.5)  [circle, draw = black, fill = black, inner sep = 0.5mm] {};
\node (v11) at (4.5,0.5)  [circle, draw = black, fill = black, inner sep = 0.5mm] {};
\node (v10) at (4.5,-0.5)  [circle, draw = black, fill = black, inner sep = 0.5mm] {};
\node (v13) at (5.5,-0.5)  [circle, draw = black, fill = black, inner sep = 0.5mm] {};
\draw (v10) -- (v11) -- (v12) -- (v13);
\draw (v11) -- (v14);
\draw[->] (6,0.5) -- (6.5,0.5);
\node [circle, draw = black, fill = black, inner sep = 0.5mm] (v15) at (15,1.5) {};
\node [circle, draw = black, fill = black, inner sep = 0.5mm] (v16) at (15.5,0.5) {};
\node [circle, draw = black, fill = black, inner sep = 0.5mm] (v18) at (16,1.5) {};
\node [circle, draw = black, fill = black, inner sep = 0.5mm] (v17) at (15.5,-0.5) {};
\node [circle, draw = black, fill = black, inner sep = 0.5mm] (v19) at (16.5,-0.5) {};
\node [circle, draw = black, fill = black, inner sep = 0.5mm] (v20) at (17,1.5) {};
\node [circle, draw = black, fill = black, inner sep = 0.5mm] (v23) at (18,1.5) {};
\node [circle, draw = black, fill = black, inner sep = 0.5mm] (v21) at (17.5,0.5) {};
\node [circle, draw = black, fill = black, inner sep = 0.5mm] (v22) at (17.5,-0.5) {};
\draw (v15) -- (v16) -- (v17);
\draw (v16) -- (v18) -- (v19) -- (v20) -- (v21) -- (v22);
\draw (v17) -- (v23) -- (v21);
\draw[->] (14,0.5) -- (14.5,0.5);
\end{tikzpicture}$$
\caption{Construction of a height-two poset}\label{fig:Frobnonpure}
\end{figure}
\end{Ex}
\end{tcolorbox}
\medskip

In the following theorem, given a connected, height-two poset $\mathcal{P}$ satisfying $\ind\mathfrak{g}_A(\mathcal{P})=1$, we determine restrictions on the form of $\mathcal{P}^i$, for $i\in\mathcal{P}\backslash Ext(\mathcal{P})$, as defined in the construction sequence.

\begin{theorem}\label{thm:blocks}
Let $\mathcal{P}$ be a connected, height-two poset such that $\ind\mathfrak{g}_A(\mathcal{P})=1$. If $i\in\mathcal{P}\backslash Ext(\mathcal{P})$, then $\mathcal{P}^i$ must be of one of the following forms: $\mathcal{P}(1,1,1)$, $\mathcal{P}(1,1,2)$, $\mathcal{P}(2,1,1)$, $\mathcal{P}(1,1,3)$, or $\mathcal{P}(3,1,1)$.
\end{theorem}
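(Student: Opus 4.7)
The plan is to reduce the statement to a direct index computation on the local poset $\mathcal{P}^i$ by exploiting the monotonicity inequality \eqref{eqn:indinc} from the construction sequence. Fix $i\in \mathcal{P}\setminus Ext(\mathcal{P})$ and note that, by the setup of the construction sequence, we are free to pick $\mathcal{S}_0=\mathcal{P}^i\in S$ as the starting block. The resulting chain $\mathcal{P}^i=\mathcal{P}_0\subset \mathcal{P}_1\subset\cdots\subset \mathcal{P}_k=\mathcal{P}$ gives, via \eqref{eqn:indinc},
$$\ind\mathfrak{g}_A(\mathcal{P}^i)\le \ind\mathfrak{g}_A(\mathcal{P})=1.$$

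Next I would compute $\ind\mathfrak{g}_A(\mathcal{P}^i)$ via Theorem~\ref{thm:gind}. Writing $\mathcal{P}^i=\mathcal{P}(n,1,m)$ with $n=D(\mathcal{P},i)\ge 1$ and $m=U(\mathcal{P},i)\ge 1$, one reads off $|\mathcal{P}^i|=n+m+1$, $C_{\mathcal{P}^i}=1$, $|Rel_E(\mathcal{P}^i)|=nm$ (every minimal element of $\mathcal{P}^i$ is below every maximal one), and $UD(\mathcal{P}^i,i)=|n-m|$ when $n\ne m$, else $2$. Plugging into Theorem~\ref{thm:gind} collapses the formula to
$$\ind\mathfrak{g}_A(\mathcal{P}^i)=nm-(n+m)+UD(\mathcal{P}^i,i).$$

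The remaining step is a finite case check showing the inequality $nm-n-m+UD\le 1$ cuts out exactly the five listed pairs. If $n=m$, the expression equals $n^2-2n+2$, which is $\le 1$ only when $n=1$, giving $\mathcal{P}(1,1,1)$. If $n<m$, the expression simplifies to $nm-n-m+(m-n)=nm-2n$. For $n=1$ this is $m-2$, which is $\le 1$ precisely when $m\le 3$, yielding $\mathcal{P}(1,1,2)$ and $\mathcal{P}(1,1,3)$; for $n\ge 2$ one has $nm-2n=n(m-2)\ge 2$, excluding those cases. The symmetric analysis when $m<n$ produces $\mathcal{P}(2,1,1)$ and $\mathcal{P}(3,1,1)$. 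Combining the cases yields exactly the five forms claimed.

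The main obstacle is essentially bookkeeping: verifying that the starting step of the construction sequence can indeed be taken to be an arbitrary $\mathcal{P}^i$ (which the construction sequence paragraph already asserts, relying on the connectedness of $\mathcal{P}$), and then making sure no case is dropped in the enumeration of $(n,m)$. The argument is otherwise a one-line consequence of the index formula together with \eqref{eqn:indinc}.
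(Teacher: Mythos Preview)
Your proof is correct and follows essentially the same approach as the paper: reduce to $\ind\mathfrak{g}_A(\mathcal{P}^i)\le 1$ via the monotonicity inequality \eqref{eqn:indinc} by starting the construction sequence at $\mathcal{P}^i$, then finish with Theorem~\ref{thm:gind}. The only difference is that the paper leaves the second step as ``using Theorem~\ref{thm:gind}, the result follows,'' whereas you have actually written out the explicit index computation and the case analysis on $(n,m)$.
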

\begin{proof}
Let $i\in\mathcal{P}\backslash Ext(\mathcal{P})$. First we show that $\ind\mathfrak{g}_A(\mathcal{P}^i)\le 1$. If not, then take $\mathcal{P}_0=\mathcal{P}^i$ in the construction of $\mathcal{P}$ as outlined in the construction sequence. Considering (\ref{eqn:indinc}), we have $$1<\ind\mathfrak{g}_A(\mathcal{P}_0)\le \ind\mathfrak{g}_A(\mathcal{P}_1)\le\hdots\le \ind\mathfrak{g}_A(\mathcal{P}_k)=\ind\mathfrak{g}_A(\mathcal{P}),$$ which is a contradiction. Thus, $\ind\mathfrak{g}_A(\mathcal{P}^i)\le 1$, for all $i\in\mathcal{P}\backslash Ext(\mathcal{P})$. Now, using Theorem~\ref{thm:gind}, the result follows.
\end{proof}

Given a connected, height-two poset $\mathcal{P}$ for which $\mathfrak{g}_A(\mathcal{P})$ is contact, the next theorem further restricts the form of $\mathcal{P}^i$, for $i\in\mathcal{P}\backslash Ext(\mathcal{P})$.

\begin{theorem}\label{lem:113}
If $\mathcal{P}$ is a connected, height-two poset for which $\mathfrak{g}_A(\mathcal{P})$ is contact, then $\mathcal{P}^i\neq \mathcal{P}(1,1,3)$ or $\mathcal{P}(3,1,1)$, for $i\in\mathcal{P}\backslash Ext(\mathcal{P})$.
\end{theorem}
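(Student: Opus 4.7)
The plan is to argue by contradiction, in the spirit of Theorem~\ref{lem:cycle}: assume $\mathfrak{g}_A(\mathcal{P})$ is contact with contact form $\varphi$ and exhibit a nontrivial linear dependence among the rows of $\left[\widehat{B}_\varphi\right]$, contradicting Theorem~\ref{thm:det}. For definiteness I focus on $\mathcal{P}^i = \mathcal{P}(1,1,3)$, with unique minimum $a \prec i$ and three maxima $i \prec b_1, b_2, b_3$; the case $\mathcal{P}(3,1,1)$ follows from a dual argument that interchanges minima and maxima. Set $c_j = \varphi(E_{a, b_j})$ and $d_j = \varphi(E_{i, b_j})$ for $j=1,2,3$.

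The first step is a structural observation about the six rows $\mathbf{E_{a, b_j}}, \mathbf{E_{i, b_j}}$ of $\left[\widehat{B}_\varphi\right]$. Since $a$ is minimal and $b_j$ is maximal in $\mathcal{P}$, the element $E_{a, b_j}$ commutes with every off-diagonal basis element, so row $\mathbf{E_{a, b_j}}$ is supported only in column $\mathbf{I}$ and the diagonal columns $\mathbf{H_p}$---the same sparse-row condition exploited in Theorem~\ref{lem:cycle}. For $E_{i, b_j}$, the only off-diagonal partner producing a nonzero bracket is $E_{a, i}$ (since $b_j$ is maximal and $a$ is the unique element below $i$ in $\mathcal{P}$), with $[E_{i, b_j}, E_{a, i}] = -E_{a, b_j}$; so row $\mathbf{E_{i, b_j}}$ is supported only in columns $\mathbf{I}$, $\mathbf{E_{a, i}}$, and the $\mathbf{H_p}$.

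Next I look for scalars $(\alpha_j, \beta_j)$, not all zero, making $\mathscr{L} := \sum_j (\alpha_j \mathbf{E_{a, b_j}} + \beta_j \mathbf{E_{i, b_j}}) = 0$. Using the formula $[H_p, E_{x,y}] = (\delta_{x,1} - \delta_{x,p} + \delta_{y,p}) E_{x,y}$ to read off the entries of the six rows, the vanishing of $\mathscr{L}$ in every column reduces to the system
\[
\sum_j \beta_j c_j = 0, \qquad \sum_j \beta_j d_j = 0, \qquad \alpha_k c_k + \beta_k d_k = 0 \text{ for each } k.
\]
If $c_j = 0$ for some $j$, then row $\mathbf{E_{a, b_j}}$ is already zero and we are done. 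Otherwise $\alpha_k$ is determined by $\beta_k$ via $\alpha_k = -\beta_k d_k / c_k$, and the problem reduces to finding a nonzero $(\beta_1, \beta_2, \beta_3) \in \mathbf{k}^3$ subject to just two linear conditions, which always admits a nontrivial solution. Either way $\mathscr{L}$ is a nontrivial vanishing combination of rows and $\det\left[\widehat{B}_\varphi\right] = 0$.

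The step I anticipate as the most delicate is the column-by-column verification in the $\mathbf{H_p}$ columns, since the formula for $[H_p, E_{x,y}]$ behaves differently according to whether the smallest label $1$ of $\mathcal{P}$ coincides with $a$ (the $\delta_{x,1}$ term turns on). A brief case analysis shows that in each subcase the additional constraints arising from columns $\mathbf{H_a}$ (when it exists), $\mathbf{H_i}$, and $\mathbf{H_{b_k}}$ either coincide with or are implied by the displayed system. The dual case $\mathcal{P}^i = \mathcal{P}(3,1,1)$ proceeds identically with the six rows $\mathbf{E_{a_j, b}}$ and $\mathbf{E_{a_j, i}}$: the former commute with all off-diagonal basis elements, the latter commute with all except $E_{i, b}$, and the analogous linear-algebra reduction produces a nontrivial vanishing combination.
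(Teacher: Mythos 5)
Your proposal is correct and takes essentially the same approach as the paper: both derive the contradiction by exhibiting a nontrivial vanishing linear combination of the same six rows $\mathbf{E_{a,b_j}}$, $\mathbf{E_{i,b_j}}$ of $\bigl[\widehat{B}_{\varphi}\bigr]$, using the fact that these rows are supported only on column $\mathbf{I}$, the diagonal columns, and (for the latter three) column $\mathbf{E_{a,i}}$. The only difference is organizational -- the paper constructs two explicit combinations $\mathscr{L}_1,\mathscr{L}_2$, each supported on the single column $\mathbf{E_{a,i}}$, after a short case analysis on which of the $\varphi(E_{i,b_j})$ vanish, and then cancels them, whereas you reach the same dependence by a dimension count on the underdetermined linear system, which sidesteps that case analysis.
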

\begin{proof}
Let $\mathcal{P}$ be a connected, height-two poset such that $\mathfrak{g}=\mathfrak{g}_A(\mathcal{P})$ is contact with contact structure $\varphi\in\mathfrak{g}^*$. Assume that $\mathcal{P}^i=\mathcal{P}(1,1,3)$ for some $i\in\mathcal{P}\backslash Ext(\mathcal{P})$; a similar argument applies for $\mathcal{P}^i=\mathcal{P}(3,1,1)$. Without loss of generality, assume that $\mathcal{P}^i=\{1,2,3,4,5\}$ with $1\prec 2\prec 3,4,5$. Let $D_{1,p}=E_{1,1}-E_{p,p},$ fix the ordered basis $$\mathscr{B}(\mathfrak{g})=\{D_{1,p}~|~1\neq p\in\mathcal{P}\}\cup\{E_{p,q}~|~p\prec q\},$$ and consider $\left[\widehat{B}_{\varphi}\right]=\varphi\left(\widehat{C}(\mathfrak{g},\mathscr{B}(\mathfrak{g}))\right)$. The rows $\mathbf{E_{1,3}}$, $\mathbf{E_{1,4}}$, $\mathbf{E_{1,5}}$, $\mathbf{E_{2,3}}$, $\mathbf{E_{2,4}}$, and $\mathbf{E_{2,5}}$ of $\left[\widehat{B}_{\varphi}\right]$ are illustrated below with zero-columns removed.

\begin{figure}[H]
\[
  \kbordermatrix{
    & \mathbf{I} & \mathbf{D_{1,2}} & \mathbf{D_{1,3}} & \mathbf{D_{1,4}} & \mathbf{D_{1,5}} & \mathbf{E_{1,2}} \\
    \mathbf{E_{1,3}} & -\varphi(E_{1,3}) & -\varphi(E_{1,3}) & -2\varphi(E_{1,3}) & -\varphi(E_{1,3}) & -\varphi(E_{1,3}) & 0 \\
    \mathbf{E_{1,4}} & -\varphi(E_{1,4}) & -\varphi(E_{1,4}) & -\varphi(E_{1,4}) & -2\varphi(E_{1,4}) & -\varphi(E_{1,4}) & 0 \\
    \mathbf{E_{1,5}} & -\varphi(E_{1,5}) & -\varphi(E_{1,5}) & -\varphi(E_{1,5}) & -\varphi(E_{1,5}) & -2\varphi(E_{1,5}) & 0 \\
    \mathbf{E_{2,3}} & -\varphi(E_{2,3}) & \varphi(E_{2,3}) & -\varphi(E_{2,3}) & 0 & 0 & -\varphi(E_{1,3})  \\
    \mathbf{E_{2,4}} & -\varphi(E_{2,4}) & \varphi(E_{2,4}) & 0 & -\varphi(E_{2,4}) & 0 & -\varphi(E_{1,4})  \\
    \mathbf{E_{2,5}} & -\varphi(E_{2,5}) & \varphi(E_{2,5}) & 0 & 0 & -\varphi(E_{2,5}) & -\varphi(E_{1,5})  \\
  }
\]
\caption{Select rows and columns of $\left[\widehat{B}_{\varphi}\right]$}\label{fig:113mat}
\end{figure}
Consider rows $\mathbf{E_{1,3}}$, $\mathbf{E_{1,4}}$, and $\mathbf{E_{1,5}}$. Since $\varphi$ is a contact form, it follows that $$\varphi(E_{1,3}),~\varphi(E_{1,4}),~\varphi(E_{1,5})\neq 0.$$ Similarly, considering rows $\mathbf{E_{2,3}}$, $\mathbf{E_{2,4}}$, and $\mathbf{E_{2,5}}$, no two of $\varphi(E_{2,3}),~\varphi(E_{2,4}),~\varphi(E_{2,5})$ can be equal to zero; otherwise, if $\varphi(E_{2,p_1}),\varphi(E_{2,p_2})= 0$, for $p_1\neq p_2\in\{3,4,5\}$, then row $\mathbf{E_{2,p_1}}$ is in the span of row $\mathbf{E_{2,p_2}}$. Assume $\varphi(E_{2,p_1}),\varphi(E_{2,p_2})\neq 0$, for $p_1\neq p_2\in\{3,4,5\}$.

Define a linear combination of rows of $\left[\widehat{B}_{\varphi}\right]$ as follows: $$\mathscr{L}_1=\frac{1}{\varphi(E_{2,p_1})}\mathbf{E_{2,p_1}}-\frac{1}{\varphi(E_{2,p_2})}\mathbf{E_{2,p_2}}-\frac{1}{\varphi(E_{1,p_1})}\mathbf{E_{1,p_1}}+\frac{1}{\varphi(E_{1,p_2})}\mathbf{E_{1,p_2}}.$$  Evidently, $\mathscr{L}_1$ has entries of 0 in columns $\mathbf{I}$ and $\mathbf{D_{1,p}}$, for $2\le p\le 5$, and an entry of $$v_1=\frac{\varphi(E_{1,p_2})}{\varphi(E_{2,p_2})}-\frac{\varphi(E_{1,p_1})}{\varphi(E_{2,p_1})}$$ in column $\mathbf{E_{1,2}}$. Since $\varphi$ is a contact form on $\mathfrak{g},$ $v_1\neq 0$. It follows that $\varphi(E_{2,p_3})\neq 0,$ for $p_3\in\{3,4,5\}\backslash\{p_1,p_2\}$; otherwise, row $\mathbf{E_{2,p_3}}$ would be in the span of rows $\mathbf{E_{1,p_1}}$, $\mathbf{E_{1,p_2}}$, $\mathbf{E_{2,p_1}}$, and $\mathbf{E_{2,p_2}}$, contradicting the assumption that $\varphi$ is a contact form on $\mathfrak{g}$. 

Now, define a second linear combination of rows of $\left[\widehat{B}_{\varphi}\right]$ as follows: $$\mathscr{L}_2=\frac{1}{\varphi(E_{2,p_3})}\mathbf{E_{2,p_3}}-\frac{1}{\varphi(E_{2,p_2})}\mathbf{E_{2,p_2}}-\frac{1}{\varphi(E_{1,p_3})}\mathbf{E_{1,p_3}}+\frac{1}{\varphi(E_{1,p_2})}\mathbf{E_{1,p_2}}.$$ As in the case of $\mathscr{L}_1$, $\mathscr{L}_2$ has a single non-zero entry, say $v_2$, in column $\mathbf{E_{1,2}}$. Therefore, $\frac{1}{v_1}\mathscr{L}_1-\frac{1}{v_2}\mathscr{L}_2$ is equal to the zero row and provides a nontrivial dependence relation between the rows of $\left[\widehat{B}_{\varphi}\right]$; that is, $\det\left(\left[\widehat{B}_{\varphi}\right]\right)= 0$, a contradiction. The result follows.
\end{proof}

Thus, to construct a connected, height-two poset $\mathcal{P}$ for which $\mathfrak{g}_A(\mathcal{P})$ is contact, the building blocks are necessarily of the form $\mathcal{P}(1,1)$, $\mathcal{P}(1,1,1)$, $\mathcal{P}(1,1,2)$, or $\mathcal{P}(2,1,1)$. Next, we consider how such posets must be combined so that the resulting type-A Lie poset algebra is contact. As an intermediate step, we determine how such posets must be combined so that the resulting type-A Lie poset algebra has index one.

Let $\mathcal{S}$ be a poset of the form $\mathcal{P}(1,1)$, $\mathcal{P}(1,1,1)$, $\mathcal{P}(2,1,1)$, or $\mathcal{P}(1,1,2)$ and $\mathcal{Q}$ be a connected, height-two poset. We list all ways of ``gluing" the posets $\mathcal{S}$ and $\mathcal{Q}$ by identifying minimal (resp., maximal) elements of $\mathcal{S}$ with distinct minimal (resp., maximal) elements of $\mathcal{Q}$. If $\mathcal{S}$ is of the form $\mathcal{P}(1,1)$ or $\mathcal{P}(1,1,1)$, then $Ext(\mathcal{S})=\{a_1,c\}$ with $c\prec_{\mathcal{S}}a_1$;  and if $\mathcal{S}$ is of the form $\mathcal{P}(2,1,1)$ or $\mathcal{P}(1,1,2)$, then $Ext(\mathcal{S})=\{a_1,a_2,c\}$ with either $c\prec_{\mathcal{S}} a_1,a_2$ or $a_1,a_2\prec_{\mathcal{S}} c$. Further, assume $x,y,z\in Ext(\mathcal{Q})$; ongoing, we assume that if $|Ext(\mathcal{Q})|=2$, then $x,y\in Ext(\mathcal{Q}),$ and any rules defined below involving $z$ do not apply. Since the gluing rules are defined by identifying minimal elements and maximal elements of $\mathcal{S}$ and $\mathcal{Q}$, assume that if $c,a_1$, or $a_2$ are identified with elements of $\mathcal{Q}$, then those elements are $x,y,$ or $z$, respectively. To ease notation, let $\sim_{\mathcal{P}}$ denote that two elements of a poset $\mathcal{P}$ are related, and let $\nsim_{\mathcal{P}}$ denote that two elements are not related; that is, for $i,j\in\mathcal{P}$, $i\sim_{\mathcal{P}} j$ denotes that $i\preceq_{\mathcal{P}}j$ or $j\preceq_{\mathcal{P}}i$, and $i\nsim_{\mathcal{P}} j$ denotes that both $i\npreceq_{\mathcal{P}}j$ and $j\npreceq_{\mathcal{P}}i$. The following Table~\ref{tab:h2fassem} lists all possible ways of identifying the elements $c,a_1,a_2\in\mathcal{S}$ with the elements $x,y,z\in\mathcal{Q}$. The last column of Table~\ref{tab:h2fassem} records the attendant contributions to the index; that is, if $\mathcal{P}$ is the poset resulting from gluing $\mathcal{S}$ to $\mathcal{Q}$, then this column gives $\ind\mathfrak{g}_A(\mathcal{P})-\ind\mathfrak{g}_A(\mathcal{Q})$.

\begin{theorem}\label{lem:table}
The table below summarizes the contribution to the index of a height-two poset upon adjoining a copy of $\mathcal{P}(1,1)$, $\mathcal{P}(1,1,1)$, $\mathcal{P}(1,1,2)$, or $\mathcal{P}(2,1,1)$ as described above.

\begin{table}[H]
\centering
\begin{tabular}{c|c|c|c|c}
Gluing Rule & $c$                       & $a_1$                         & $a_2$                         & Contribution to the Index \\ \hline
$A_1$ & $c\neq x$ & $a_1=y$         & $a_2\neq z$ & $\ind\mathfrak{g}_A(\mathcal{S})$                         \\
$A_2$ & $c\neq x$ & $a_1\neq y$             & $a_2=z$ & $\ind\mathfrak{g}_A(\mathcal{S})$                         \\
$B$ & $c\neq x$ & $a_1=y$       & $a_2=z$                 & $\ind\mathfrak{g}_A(\mathcal{S})+1$                         \\
$C$ & $c=x$ & $a_1\neq y$                 & $a_2\neq z$                 & $\ind\mathfrak{g}_A(\mathcal{S})$                         \\
$D_1$ & $c=x$                 & $a_1=y$, $y\sim x$ & $a_2\neq z$  & $\ind\mathfrak{g}_A(\mathcal{S})$                         \\
$D_2$ & $c=x$                 & $a_1\neq y$  & $a_2= z$, $z\sim x$  & $\ind\mathfrak{g}_A(\mathcal{S})$                         \\
$E_1$ & $c=x$                 & $a_1= y$, $y\nsim x$  & $a_2\neq z$  & $\ind\mathfrak{g}_A(\mathcal{S})+1$                         \\
$E_2$ & $c=x$                 & $a_1\neq y$  & $a_2= z$, $z\nsim x$  & $\ind\mathfrak{g}_A(\mathcal{S})+1$                         \\
$F$ & $c=x$                 & $a_1= y$, $y\sim x$  & $a_2= z$, $z\sim x$  & $\ind\mathfrak{g}_A(\mathcal{S})$                         \\
$G_1$ & $c=x$                 & $a_1= y$, $y\sim x$  & $a_2= z$, $z\nsim x$  & $\ind\mathfrak{g}_A(\mathcal{S})+1$                         \\
$G_2$ & $c=x$                 & $a_1= y$, $y\nsim x$  & $a_2= z$, $z\sim x$  & $\ind\mathfrak{g}_A(\mathcal{S})+1$                         \\
$H$ & $c=x$                 & $a_1= y$, $y\nsim x$  & $a_2= z$, $z\nsim x$  & $\ind\mathfrak{g}_A(\mathcal{S})+2$                         \\
\end{tabular}
\caption{Height-two gluing rules}\label{tab:h2fassem}
\end{table}

\end{theorem}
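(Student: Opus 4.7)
My plan is to apply Theorem~\ref{thm:gind} to both $\mathcal{P}$ and $\mathcal{Q}$ and track how each of its four summands changes under the gluing. The central structural observation is that, because maxima of $\mathcal{S}$ are identified only with maxima of $\mathcal{Q}$ (and minima with minima), and because the intermediate element of $\mathcal{S}$ (when present) is adjacent only to extremal elements of $\mathcal{S}$, no new relations arise by transitive closure. In particular: every intermediate element of $\mathcal{S}$ retains its $D$- and $U$-values in $\mathcal{P}$ (and likewise for intermediate elements of $\mathcal{Q}$); an identified extremal element remains extremal in $\mathcal{P}$; and $Rel_E(\mathcal{P})$ equals $Rel_E(\mathcal{Q})\cup Rel_E(\mathcal{S})$ modulo the identifications, with no further additions.

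From these observations, three of the four contributions in Theorem~\ref{thm:gind} fall out immediately. The $UD$-sum splits cleanly into the sum for $\mathcal{Q}$ plus the sum for $\mathcal{S}$; since every rule in the table performs at least one identification, $C_\mathcal{P}=C_\mathcal{Q}=1$; and $|\mathcal{P}|=|\mathcal{Q}|+|\mathcal{S}|-k$, where $k\in\{1,2,3\}$ is the number of identifications the rule performs. The only nontrivial contribution is $|Rel_E(\mathcal{P})|-|Rel_E(\mathcal{Q})|$: each of the $|Rel_E(\mathcal{S})|$ relations of $\mathcal{S}$ contributes a new relation to $Rel_E(\mathcal{P})$ unless both of its endpoints are identified with $\mathcal{Q}$-elements that are already related in $\mathcal{Q}$, in which case the relation is already counted in $Rel_E(\mathcal{Q})$.

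Letting $\delta$ denote the number of such already-existing relations, substituting into the index formula and comparing with $\ind\mathfrak{g}_A(\mathcal{S})=|Rel_E(\mathcal{S})|-|\mathcal{S}|+1+\sum_{j\in\mathcal{S}\setminus Ext(\mathcal{S})}UD(\mathcal{S},j)$ yields the master identity $$\ind\mathfrak{g}_A(\mathcal{P})-\ind\mathfrak{g}_A(\mathcal{Q})=\ind\mathfrak{g}_A(\mathcal{S})+(k-1-\delta).$$ The theorem then reduces to a row-by-row tally of the pair $(k,\delta)$: $(1,0)$ for $A_1,A_2,C$, giving contribution $\ind\mathfrak{g}_A(\mathcal{S})$; $(2,1)$ for $D_1,D_2$, also giving $\ind\mathfrak{g}_A(\mathcal{S})$; $(2,0)$ for $B,E_1,E_2$, giving $\ind\mathfrak{g}_A(\mathcal{S})+1$; $(3,2)$ for $F$, giving $\ind\mathfrak{g}_A(\mathcal{S})$; $(3,1)$ for $G_1,G_2$, giving $\ind\mathfrak{g}_A(\mathcal{S})+1$; and $(3,0)$ for $H$, giving $\ind\mathfrak{g}_A(\mathcal{S})+2$.

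The main obstacle is purely bookkeeping, namely confirming in each row that the stated condition (``$y\sim x$'' versus ``$y\nsim x$'' and similarly for $z$) is exactly what determines whether the relation $c\prec a_i$ in $\mathcal{S}$ coincides with a relation already in $Rel_E(\mathcal{Q})$ after identification, and handling the degenerate cases in which $\mathcal{S}\in\{\mathcal{P}(1,1),\mathcal{P}(1,1,1)\}$ has no $a_2$ so that conditions on $a_2$ are read as vacuous. With the structural observation above, each such check is immediate.
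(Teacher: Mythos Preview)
Your proof is correct and is precisely what the paper intends: its entire proof reads ``Apply Theorem~\ref{thm:gind}.'' You have simply made explicit the bookkeeping that this one-line proof suppresses, deriving the master identity $\ind\mathfrak{g}_A(\mathcal{P})-\ind\mathfrak{g}_A(\mathcal{Q})=\ind\mathfrak{g}_A(\mathcal{S})+(k-1-\delta)$ and then reading off $(k,\delta)$ row by row; this is exactly the computation the paper leaves to the reader.
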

\begin{proof}
Apply Theorem~\ref{thm:gind}.
\end{proof}

\begin{remark}
The only rules that apply to adjoining a copy of $\mathcal{P}(1,1)$ or $\mathcal{P}(1,1,1)$ in Table~\ref{tab:h2fassem} are $A_1$, $C$, $D_1$, and $E_1$. Note that applying $D_1$ to adjoin a copy of $\mathcal{P}(1,1)$ does not result in a new poset.
\end{remark}

If $\mathcal{P}$ is a connected, height-two poset such that $\mathfrak{g}_A(\mathcal{P})$ is a contact, type-A Lie poset algebra, then not all of the gluing rules given in Table~\ref{tab:h2fassem} can be used to construct $\mathcal{P}.$ Furthermore, one particular building block must appear exactly once in the construction sequence of $\mathcal{P}.$ These restrictions are detailed in the following theorem.

\begin{theorem}\label{lem:contactsequence}
Let $\{\mathcal{P}_i\}_{i=0}^n$ be a sequence of connected, height-two posets such that $\mathcal{P}_0=\mathcal{S}_0=\mathcal{P}(1,1)$, $\mathcal{P}(1,1,1)$, $\mathcal{P}(1,1,2)$, or $\mathcal{P}(2,1,1)$ and $\mathcal{P}_j$ is obtained by adjoining a copy of $\mathcal{S}_j=\mathcal{P}(1,1)$, $\mathcal{P}(1,1,1)$, $\mathcal{P}(1,1,2)$, or $\mathcal{P}(2,1,1)$ to $\mathcal{P}_{j-1}$, for $j=1,...,n$, by applying a gluing rule from the set $$\{A_1,A_2,B,C, D_1,D_2,E_1,E_2,F,G_1,G_2,H\}.$$ If $\mathfrak{g}_A(\mathcal{P}_n)$ is contact, then 
\begin{enumerate}
    \item $\mathcal{P}_j$ is obtained by adjoining $\mathcal{S}_j$ to $\mathcal{P}_{j-1}$, for $j=1,...,n$, by applying a gluing rule from the set $$\{A_1,A_2,C, D_1,D_2,F\};\text{ and}$$
    \item $\mathcal{S}_j=\mathcal{P}(1,1,1)$, for exactly one value of $j\in\{0,1,...,n\}$.
\end{enumerate}
\end{theorem}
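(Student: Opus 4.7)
The plan is to turn the assumption that $\mathfrak{g}_A(\mathcal{P}_n)$ is contact into a linear equation on the construction data, then use the cycle obstruction of Theorem~\ref{lem:cycle} to eliminate the unwanted solution. A direct application of Theorem~\ref{thm:gind} to the four allowed building blocks yields
$$\ind\mathfrak{g}_A(\mathcal{P}(1,1))=\ind\mathfrak{g}_A(\mathcal{P}(1,1,2))=\ind\mathfrak{g}_A(\mathcal{P}(2,1,1))=0,\qquad \ind\mathfrak{g}_A(\mathcal{P}(1,1,1))=1.$$
Let $k$ denote the number of indices $j\in\{0,\dots,n\}$ with $\mathcal{S}_j=\mathcal{P}(1,1,1)$, let $p_1$ count the steps using a rule from $\{B,E_1,E_2,G_1,G_2\}$, and let $p_2$ count the steps using rule $H$. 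Telescoping the per-step contributions recorded in Table~\ref{tab:h2fassem} gives
$$\ind\mathfrak{g}_A(\mathcal{P}_n)=k+p_1+2p_2.$$
Since $\mathfrak{g}_A(\mathcal{P}_n)$ is contact, this quantity equals $1$, so the only solutions in nonnegative integers are $(k,p_1,p_2)=(1,0,0)$, which is exactly (1) and (2), and $(k,p_1,p_2)=(0,1,0)$, which I need to exclude.

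Suppose, toward a contradiction, that $(k,p_1,p_2)=(0,1,0)$ and let $j^\star$ denote the unique step at which a rule from $\{B,E_1,E_2,G_1,G_2\}$ is applied. For every $j<j^\star$, $\mathcal{S}_j$ is one of the Frobenius blocks $\mathcal{P}(1,1),\mathcal{P}(1,1,2),\mathcal{P}(2,1,1)$ and the rule used has vanishing index contribution, so $\mathcal{P}_{j^\star-1}$ is a connected Frobenius poset of height at most two. By Theorem~\ref{thm:h2frob}, the Hasse diagram $T$ of $(\mathcal{P}_{j^\star-1})_{Ext(\mathcal{P}_{j^\star-1})}$ is a connected tree. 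I then plan a rule-by-rule verification that step $j^\star$ introduces a cycle into the extrema induced subposet: Rule $B$ attaches a new extremum $c$ joined by fresh edges to vertices $y,z\in T$ already connected by a path; Rules $E_1,E_2$ insert a single new edge between extrema $x,y$ (respectively $x,z$) whose non-adjacency in $\mathcal{P}_{j^\star-1}$ is precisely the rule's $\nsim$ hypothesis; and Rules $G_1,G_2$ similarly add a single new edge between two previously tree-connected extrema, since one of the two edges from $\mathcal{S}_{j^\star}$ coincides with a preexisting edge of $T$ while the other does not.

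Because every gluing identifies minima with minima and maxima with maxima, each previously extreme element remains extreme and no new extremum of a later $\mathcal{S}_j$ can be inserted along an existing covering relation of the extrema subposet. Consequently, the cycle created at step $j^\star$ persists to $(\mathcal{P}_n)_{Ext(\mathcal{P}_n)}$, and Theorem~\ref{lem:cycle} then contradicts the assumption that $\mathfrak{g}_A(\mathcal{P}_n)$ is contact. This rules out $(0,1,0)$, leaving $(1,0,0)$ and completing the proof. The main obstacle is the rule-by-rule verification of cycle creation together with the persistence argument of the last paragraph, both of which require tracking how covering relations in the extrema subposet evolve under a gluing.
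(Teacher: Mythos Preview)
Your proposal is correct and uses essentially the same ingredients as the paper's proof: the index telescoping from Table~\ref{tab:h2fassem}, the observation that a ``bad'' gluing rule creates a cycle in the Hasse diagram of the extrema subposet, and the obstruction Theorem~\ref{lem:cycle}. The paper organizes the argument slightly differently---it first shows that \emph{any} application of a rule in $\{B,E_1,E_2,G_1,G_2,H\}$ produces a cycle (using only connectedness of $\mathcal{Q}_{Ext(\mathcal{Q})}$, not the Frobenius tree property you invoke via Theorem~\ref{thm:h2frob}), and then deduces $k=1$ from the index count---whereas you do the index count first and eliminate the residual case $(0,1,0)$; but the substance is the same.
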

\begin{proof}
First we prove 1. Let $\mathcal{Q}$ be a connected, height-two poset and let $\mathcal{P}$ be a poset formed by adjoining $\mathcal{S}=\mathcal{P}(1,1)$, $\mathcal{P}(1,1,1)$, $\mathcal{P}(1,1,2)$, or $\mathcal{P}(2,1,1)$ to $\mathcal{Q}$ by applying a rule from the set $\{B, E_1,E_2,G_1,G_2,H\}$. Let $x,y,z\in\mathcal{Q}$ and $c,a_1,a_2\in\mathcal{S}$ be as in Theorem~\ref{lem:table}. Since $\mathcal{Q}$ is connected, so is $\mathcal{Q}_{Ext(\mathcal{Q})}$. Thus, there exists a path $P_{yz}$ from $y$ to $z$, a path $P_{xy}$ from $x$ to $y$, and a path $P_{xz}$ from $x$ to $z$ in the Hasse diagram of $\mathcal{Q}_{Ext(\mathcal{Q})}$. Note that the Hasse diagram of $\mathcal{P}_{Ext(\mathcal{P})}$ contains the Hasse diagram of $\mathcal{Q}_{Ext(\mathcal{Q})}$ as a subgraph. Three cases arise.
\\*

\noindent
\textbf{Case 1}: $\mathcal{P}$ is formed by adjoining $\mathcal{S}$ to $\mathcal{Q}$ using rule $B$. Combining the edges of $P_{yz}$ with the edges $\{y,c\}$ and $\{c,z\}$ yields a cycle in the Hasse diagram of $\mathcal{P}_{Ext(\mathcal{P})}$.
\\*

\noindent
\textbf{Case 2}: $\mathcal{P}$ is formed by adjoining $\mathcal{S}$ to $\mathcal{Q}$ using rule $E_1$, $G_2$, or $H$. Combining the edges of $P_{xy}$ with the edge $\{x,y\}$ yields a cycle in the Hasse diagram of $\mathcal{P}_{Ext(\mathcal{P})}$.
\\*

\noindent
\textbf{Case 3}: $\mathcal{P}$ is formed by adjoining $\mathcal{S}$ to $\mathcal{Q}$ using rule $E_2$ or $G_1$. Combining the edges of $P_{xz}$ with the edge $\{x,z\}$ yields a cycle in the Hasse diagram of $\mathcal{P}_{Ext(\mathcal{P})}$.
\\*

\noindent
In each of the three cases, there is a cycle in the Hasse diagram of $\mathcal{P}_{Ext(\mathcal{P})}$, so $\mathfrak{g}_A(\mathcal{P})$ is not contact by Theorem~\ref{lem:cycle}.

For 2, we first show that there is at most one $j\in\{0,1,\hdots,n\}$ for which $\mathcal{S}_j=\mathcal{P}(1,1,1)$. Using Theorem~\ref{lem:table}, note that if $m$ is the number of values of $j\in\{0,1,\hdots,n\}$ for which $\mathcal{S}_j=\mathcal{P}(1,1,1)$, then $m\le \ind\mathfrak{g}_A(\mathcal{P}_n)=1$. Now, to see that the number of such $j\in\{0,1,\hdots,n\}$ is at least 1, consider part 1 in conjunction with Theorem~\ref{lem:table}. If no $\mathcal{S}_j=\mathcal{P}(1,1,1)$, for $j\in\{0,\hdots,n\}$, then $\ind\mathfrak{g}_A(\mathcal{P})=0$. Thus, since contact Lie algebras have index 1, the result follows.
\end{proof}

We now have the tools necessary to define a contact sequence of posets. The following definitions will facilitate, and be followed by, the statement and proof of the first main result of this paper (Theorem~\ref{thm:contact}).

\begin{definition}
Let $\{\mathcal{P}_i\}_{i=0}^n$ be a sequence of connected, height-two posets such that $\mathcal{P}_0=\mathcal{S}_0=\mathcal{P}(1,1)$, $\mathcal{P}(1,1,1)$, $\mathcal{P}(1,1,2)$, or $\mathcal{P}(2,1,1)$ and $\mathcal{P}_j$ is obtained by adjoining a copy of $\mathcal{S}_j=\mathcal{P}(1,1)$, $\mathcal{P}(1,1,1)$, $\mathcal{P}(1,1,2)$, or $\mathcal{P}(2,1,1)$ to $\mathcal{P}_{j-1}$, for $j=1,...,n$, by applying a gluing rule from the set $\{A_1,A_2,C, D_1,D_2,F\}$. If there is a unique $j\in \{0,1,\hdots,n\}$ for which $\mathcal{S}_j=\mathcal{P}(1,1,1)$, then we refer to the sequence as a contact sequence.
\end{definition}

The following remark establishes conventions which will be used ongoing.

\begin{remark}
Given a contact sequence $\{\mathcal{P}_i\}_{i=0}^n$, assume that $\mathcal{P}_0=\mathcal{P}(1,1,1)$, where $\mathcal{P}_0=\{1,2,3\}$ with $1\prec 2\prec 3$. Also assume that, for $j=1,\hdots,n$, if $\mathcal{P}_j$ is obtained by adjoining $\mathcal{S}_j$ to $\mathcal{P}_{j-1}$ then
\begin{itemize}
    \item if $\mathcal{S}_j=\mathcal{P}(1,1)$, then $\mathcal{S}_j=\{x_j,y_j\}\subset \mathcal{P}_j$ with $x_j\prec y_j$;
    \item if $\mathcal{S}_j=\mathcal{P}(2,1,1)$, then $\mathcal{S}_j=\{y_j,z_j,m_j,x_j\}\subset\mathcal{P}_j$ with $y_j,z_j\prec m_j\prec x_j$; and
    \item if $\mathcal{S}_j=\mathcal{P}(1,1,2)$,  then $\mathcal{S}_j=\{x_j,m_j,y_j,z_j\}\subset\mathcal{P}_j$ with $x_j\prec m_j\prec y_j,z_j$.
\end{itemize}
Note that some elements of $\mathcal{P}_n$ receive multiple labels.
\end{remark}

\begin{definition}\label{def:contactfun}
Let $\{\mathcal{P}_i\}_{i=0}^n$ be a contact sequence. Define the functional $\varphi_{\mathcal{P}_j}\in(\mathfrak{g}_A(\mathcal{P}_j))^*$, for $j=0,\dots,n,$ recursively as follows:
\bigskip

\noindent
\textbf{\textup{Step 0}}: $\varphi_{\mathcal{P}_0}=E^*_{2,2}+E^*_{1,3}+E^*_{2,3}$
\bigskip

\noindent
\textbf{\textup{Step j}}:  If $\mathcal{P}_{j}$ is formed from $\mathcal{P}_{j-1}$ and $\mathcal{S}_j$, for $j=1,\dots,n$, by applying rule
    \begin{itemize}
        \item $\textup{A}_1,\textup{A}_2$, or $\textup{C}$, then $$\varphi_{\mathcal{P}_j}=\begin{cases}
        \varphi_{\mathcal{P}_{j-1}}+E^*_{x_j,y_j}, & \mathcal{S}_j=\mathcal{P}(1,1); \\
        \\
        \varphi_{\mathcal{P}_{j-1}}+E^*_{y_j,x_j}+E^*_{z_j,x_j}+E^*_{z_j,m_j}, & \mathcal{S}_j=\mathcal{P}(2,1,1); \\
        \\
        \varphi_{\mathcal{P}_{j-1}}+E^*_{x_j,y_j}+E^*_{x_j,z_j}+E^*_{m_j,z_j}, & \mathcal{S}_j=\mathcal{P}(1,1,2). \\
        \end{cases}.$$
        \item $\textup{D}_1$, then
        \[\varphi_{\mathcal{P}_j} =  \begin{cases} 
      \varphi_{\mathcal{P}_{j-1}}+E^*_{x_j,z_j}+E^*_{m_j,z_j}, & \mathcal{S}_j=\mathcal{P}(1,1,2); \\
                                                &                        \\
       \varphi_{\mathcal{P}_{j-1}}+E^*_{z_j,x_j}+E^*_{z_j,m_j}, & \mathcal{S}_j=\mathcal{P}(2,1,1).
   \end{cases}
\]
        \item $\textup{D}_2$, then
        \[\varphi_{\mathcal{P}_j} =  \begin{cases} 
      \varphi_{\mathcal{P}_{j-1}}+E^*_{x_j,y_j}+E^*_{m_j,z_j}, & \mathcal{S}_j=\mathcal{P}(1,1,2); \\
                                                &                        \\
       \varphi_{\mathcal{P}_{j-1}}+E^*_{y_j,x_j}+E^*_{z_j,m_j}, & \mathcal{S}_j=\mathcal{P}(2,1,1).
   \end{cases}
\]
        \item $\textup{F}$, then
        \[\varphi_{\mathcal{P}_j} =  \begin{cases} 
      \varphi_{\mathcal{P}_{j-1}}+E^*_{m_j,z_j}, & \mathcal{S}_j=\mathcal{P}(1,1,2); \\
                                                &                        \\
       \varphi_{\mathcal{P}_{j-1}}+E^*_{z_j,m_j}, & \mathcal{S}_j=\mathcal{P}(2,1,1).
   \end{cases}
\]
    \end{itemize}
\end{definition}


\begin{theorem}\label{thm:contact}
Let $\mathcal{P}$ be a connected, height-two poset. Then $\mathfrak{g}_A(\mathcal{P})$ is contact if and only if there exists a contact sequence $\{\mathcal{P}_i\}_{i=0}^n$ such that $\mathcal{P}_n=\mathcal{P}$.
\end{theorem}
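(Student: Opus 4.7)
The proof naturally splits into the two implications, and the forward direction follows almost immediately from the structural theorems already in place. If $\mathfrak{g}_A(\mathcal{P})$ is contact then $\ind\mathfrak{g}_A(\mathcal{P})=1$, so Theorems~\ref{thm:blocks} and \ref{lem:113} together force every $\mathcal{P}^i$ with $i\in\mathcal{P}\setminus Ext(\mathcal{P})$ to lie in $\{\mathcal{P}(1,1,1),\mathcal{P}(1,1,2),\mathcal{P}(2,1,1)\}$. The construction-sequence recipe then produces some sequence $\mathcal{P}_0\subset\cdots\subset\mathcal{P}_n=\mathcal{P}$ whose building blocks all come from $\{\mathcal{P}(1,1),\mathcal{P}(1,1,1),\mathcal{P}(1,1,2),\mathcal{P}(2,1,1)\}$, and Theorem~\ref{lem:contactsequence} restricts the gluing rules actually appearing in the sequence to $\{A_1,A_2,C,D_1,D_2,F\}$ and forces exactly one $\mathcal{S}_j=\mathcal{P}(1,1,1)$, which is the definition of a contact sequence. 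After possibly reindexing so that $\mathcal{P}_0=\mathcal{P}(1,1,1)$, this yields the desired sequence.

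For the reverse direction the plan is to prove by induction on $n$ that the functional $\varphi_{\mathcal{P}_j}$ constructed recursively in Definition~\ref{def:contactfun} is a contact form on $\mathfrak{g}_A(\mathcal{P}_j)$ for every $j$, via Salgado's determinant criterion (Theorem~\ref{thm:det}). The base case $j=0$ is the direct check that $\det([\widehat{B}_{\varphi_{\mathcal{P}_0}}])\neq 0$ for $\mathcal{P}_0=\mathcal{P}(1,1,1)$, performed by writing the bordered Kirillov matrix in the ordered basis $\{D_{1,2},D_{1,3},E_{1,2},E_{1,3},E_{2,3}\}$ and expanding. For the inductive step, assuming $\varphi_{\mathcal{P}_{j-1}}$ is contact, I carry out a case analysis over the six admissible gluing rules crossed with the three possible blocks $\mathcal{S}_j\in\{\mathcal{P}(1,1),\mathcal{P}(1,1,2),\mathcal{P}(2,1,1)\}$. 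In each case, the new basis elements of $\mathfrak{g}_A(\mathcal{P}_j)$ contribute new rows and columns to $[\widehat{B}_{\varphi_{\mathcal{P}_j}}]$, and the specific summands $E^*_{x_j,y_j}$, $E^*_{m_j,z_j}$, etc.\ added in Definition~\ref{def:contactfun} are chosen precisely so that row and column operations clear the coupling between old and new blocks.

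The main obstacle is executing this case analysis uniformly. The cleanest approach I anticipate is, for each rule, to identify an explicit sequence of elementary operations reducing $[\widehat{B}_{\varphi_{\mathcal{P}_j}}]$ to a block-triangular form whose diagonal blocks are $[\widehat{B}_{\varphi_{\mathcal{P}_{j-1}}}]$ and a small explicit matrix $M_j$ involving only the values $\varphi_{\mathcal{P}_j}(E_{x_j,y_j}),\varphi_{\mathcal{P}_j}(E_{m_j,z_j})$, and similar, so that
\[
\det\bigl([\widehat{B}_{\varphi_{\mathcal{P}_j}}]\bigr)=\pm\det\bigl([\widehat{B}_{\varphi_{\mathcal{P}_{j-1}}}]\bigr)\cdot\det(M_j).
\]
Verifying that $\det(M_j)\neq 0$ in each of the (at most) twelve subcases amounts to a small bookkeeping computation, and the inductive hypothesis handles the other factor. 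A subtle point to track is that rules $D_1,D_2,F$ identify the element $c\in\mathcal{S}_j$ with an existing extremal element of $\mathcal{P}_{j-1}$, so the diagonal basis element $D_{1,c}$ is shared between steps; the branching in Definition~\ref{def:contactfun} is designed so that the new contributions in column $\mathbf{D_{1,c}}$ cancel against existing ones, which is what lets the block decomposition go through.
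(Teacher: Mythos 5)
Your forward direction is fine and is essentially what the paper does: once $\ind\mathfrak{g}_A(\mathcal{P})=1$ forces the building blocks (Theorems~\ref{thm:blocks} and \ref{lem:113}) and Theorem~\ref{lem:contactsequence} forces the admissible rules and the unique $\mathcal{P}(1,1,1)$, any construction sequence for $\mathcal{P}$ is a contact sequence. The reverse direction is where your proposal diverges from the paper and where it has a real gap.

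The crux of your inductive step is the claimed factorization $\det([\widehat{B}_{\varphi_{\mathcal{P}_j}}])=\pm\det([\widehat{B}_{\varphi_{\mathcal{P}_{j-1}}}])\cdot\det(M_j)$ with $M_j$ depending only on the new values $\varphi_{\mathcal{P}_j}(E_{x_j,y_j})$, $\varphi_{\mathcal{P}_j}(E_{m_j,z_j})$, etc. Writing $[\widehat{B}_{\varphi_{\mathcal{P}_j}}]$ in block form with $A=[\widehat{B}_{\varphi_{\mathcal{P}_{j-1}}}]$ as the old-old block (which is correct, since the added dual vectors are supported on new basis elements), the identity you want is the Schur-complement formula, so $M_j=D-CA^{-1}B$. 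The coupling blocks $B,C$ are genuinely nonzero: for instance, under rules $C$, $D_1$, $D_2$, $F$ the new row $\mathbf{E_{m_j,z_j}}$ has entries $\pm\varphi(E_{m_j,z_j})\neq 0$ in column $\mathbf{I}$ and in the old diagonal columns $\mathbf{E_{1,1}-E_{p,p}}$ for the identified extremal elements $p$. Hence $M_j$ a priori involves $A^{-1}$, i.e.\ the inverse of the entire previous bordered matrix, whose entries depend on the whole history of the gluing sequence and not just on local data. You give no argument that $CA^{-1}B$ vanishes or localizes, and this is precisely the hard part of the theorem; calling the verification of $\det(M_j)\neq 0$ ``a small bookkeeping computation'' is where the proof is missing. (You do correctly flag that one must take $\mathcal{P}_0=\mathcal{P}(1,1,1)$ so that every intermediate algebra has index one and odd dimension; without that normalization the intermediate bordered matrices are odd-order skew-symmetric and identically singular, and the induction collapses.)

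The paper avoids induction on the determinant altogether. It first proves Lemma~\ref{lem:extrel} (a tree-elimination argument on the ``directed graph'' of $\varphi_{\mathcal{P}}$ showing $E^*_{i,j}(L)=0$ on the support of $\varphi_{\mathcal{P}}$ for any $L\in\ker B_{\varphi_{\mathcal{P}}}$), then Theorem~\ref{thm:regular}, which computes $\ker B_{\varphi_{\mathcal{P}}}$ explicitly from the defining relations in four groups and shows it is spanned by the single element $L=\sum_{2\neq p}E_{p,p}+(1-|\mathcal{P}|)E_{2,2}+|\mathcal{P}|E_{1,2}$. Since $\varphi_{\mathcal{P}}(L)=1-|\mathcal{P}|\neq 0$, expanding $\det([\widehat{B}_{\varphi_{\mathcal{P}}}])$ along row and column $\mathbf{L}$ gives $\varphi_{\mathcal{P}}(L)^2\det([B'_{\varphi_{\mathcal{P}}}])\neq 0$ -- the same device as in Theorem~\ref{lem:commute}. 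If you want to salvage your approach, you would either need to prove the locality of the Schur complement rule-by-rule, or switch to the paper's global strategy of exhibiting a kernel vector of the unbordered form on which $\varphi_{\mathcal{P}}$ does not vanish.
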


The following is an immediate corollary of Theorem~\ref{thm:contact}.

\begin{theorem}
Let $\mathcal{P}$ be a connected, height-two poset. Then $\mathfrak{g}_A(\mathcal{P})$ is contact if and only if 
\begin{itemize}
    \item $|Ext(\mathcal{P}_{\{j\in\mathcal{P}~|~i\preceq j\text{ or }j\preceq i\}})|=2$ or $3$ for all $i\in\mathcal{P}\backslash Ext(\mathcal{P})$;
    \item there exists a unique $i\in\mathcal{P}\backslash Ext(\mathcal{P})$ satisfying $|Ext(\mathcal{P}_{\{j\in\mathcal{P}~|~i\preceq j\text{ or }j\preceq i\}})|=2$; and
    \item the Hasse diagram of $\mathcal{P}_{Ext(\mathcal{P})}$ is a tree.
\end{itemize}
\end{theorem}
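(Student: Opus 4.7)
The plan is to deduce each of the three conditions from the contact-sequence characterization in Theorem~\ref{thm:contact}, and conversely build a contact sequence from them.

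For the forward direction, assume $\mathfrak{g}_A(\mathcal{P})$ is contact. By Theorem~\ref{thm:contact}, $\mathcal{P} = \mathcal{P}_n$ for some contact sequence $\{\mathcal{P}_i\}_{i=0}^n$. Each element $i \in \mathcal{P}\backslash Ext(\mathcal{P})$ is the middle element of exactly one building block $\mathcal{S}_j$ in the sequence (the unique block whose middle rank contains $i$), and by construction this block equals $\mathcal{P}^i$. Since the allowed building blocks are $\mathcal{P}(1,1)$, $\mathcal{P}(1,1,1)$, $\mathcal{P}(1,1,2)$, and $\mathcal{P}(2,1,1)$, the only ones containing a middle element are the latter three, so $|Ext(\mathcal{P}^i)| \in \{2,3\}$, giving condition~1. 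By part~2 of Theorem~\ref{lem:contactsequence}, exactly one $\mathcal{S}_j$ equals $\mathcal{P}(1,1,1)$, which translates to there being a unique $i$ with $|Ext(\mathcal{P}^i)|=2$, giving condition~2. Theorem~\ref{lem:cycle} forbids cycles in the Hasse diagram of $\mathcal{P}_{Ext(\mathcal{P})}$, and a short argument shows this diagram is connected whenever $\mathcal{P}$ is: any path between extremal elements in the Hasse diagram of $\mathcal{P}$ alternates through middle elements of height-one blocks, each step of which corresponds to an edge of $\mathcal{P}_{Ext(\mathcal{P})}$. Hence condition~3.

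For the reverse direction, assume the three conditions. Define $S$ as in the construction sequence. By conditions~1 and~2, every $\mathcal{P}^i \in \{\mathcal{P}(1,1,1),\mathcal{P}(1,1,2),\mathcal{P}(2,1,1)\}$ with $\mathcal{P}(1,1,1)$ occurring exactly once, so $S$ is the correct stock of building blocks. Designate $\mathcal{S}_0$ to be this unique $\mathcal{P}(1,1,1)$. I then build $\mathcal{P}$ inductively: because the Hasse diagram of $\mathcal{P}_{Ext(\mathcal{P})}$ is a tree, I can always locate a block $\mathcal{S} \in S$ whose attachment to the currently-built subposet identifies exactly the extremal elements demanded by one of the rules $A_1, A_2, C, D_1, D_2$, or $F$ — the gluings that, as shown in the proof of Theorem~\ref{lem:contactsequence}, are precisely those which do not force a cycle in the Hasse diagram of $\mathcal{P}_{Ext(\mathcal{P})}$. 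The remaining rules $B,E_1,E_2,G_1,G_2,H$ are never required because using one of them would introduce a cycle, contradicting condition~3. This produces a contact sequence realizing $\mathcal{P}$, so Theorem~\ref{thm:contact} yields that $\mathfrak{g}_A(\mathcal{P})$ is contact.

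The main obstacle is the inductive step of the reverse direction: verifying that at every stage there exists an as-yet-unused block in $S$ attachable by an admissible rule. The way I would handle this is by a leaf-peeling argument on the tree $\mathcal{P}_{Ext(\mathcal{P})}$. Each block contributes one edge ($\mathcal{P}(1,1)$ or $\mathcal{P}(1,1,1)$) or a two-edge star ($\mathcal{P}(1,1,2)$ or $\mathcal{P}(2,1,1)$) to this tree; choose a leaf $v$ of the tree whose incident edge belongs to some block $\mathcal{S}$ that, after being removed together with its edges, leaves the remaining structure still a tree (such a leaf always exists in a finite tree, and the sharing of extremal elements within a star block guarantees the attachment glues along only the extremal elements required by one of the allowed rules). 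Reversing the removal order yields the desired contact sequence, completing the proof.
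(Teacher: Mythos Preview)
The paper states this result as an immediate corollary of Theorem~\ref{thm:contact} and gives no proof at all. Your argument is precisely the natural unpacking of why it follows, and it is correct in substance.

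One remark on the reverse direction: the leaf-peeling construction you outline is more work than necessary. The paper's ``construction sequence'' framework already guarantees that \emph{any} connected height-two poset can be built as a sequence $\mathcal{P}_0\subset\cdots\subset\mathcal{P}_n=\mathcal{P}$ starting from any chosen block $\mathcal{S}_0\in S$; you simply take $\mathcal{S}_0$ to be the unique $\mathcal{P}(1,1,1)$. At each subsequent step the gluing rule is \emph{determined} by how $\mathcal{S}_j$ overlaps $\mathcal{P}_{j-1}$, and the proof of Theorem~\ref{lem:contactsequence} part 1 shows that any of the bad rules $B,E_1,E_2,G_1,G_2,H$ would force a cycle in the Hasse diagram of $(\mathcal{P}_j)_{Ext(\mathcal{P}_j)}$, which is a subgraph of the Hasse diagram of $\mathcal{P}_{Ext(\mathcal{P})}$. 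Condition~3 therefore rules these out automatically, with no need to choose blocks cleverly. You already state this key observation (``using one of them would introduce a cycle''); the leaf-peeling paragraph can be dropped.
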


\subsubsection{Proof of Theorem~\ref{thm:contact}}

It follows from Theorem~\ref{lem:contactsequence} that if $\mathfrak{g}_A(\mathcal{P})$ is contact with $\mathcal{P}$ a connected, height-two poset, then any sequence of posets corresponding to $\mathcal{P}$ as described in the construction sequence must form a contact sequence $\{\mathcal{P}_i\}_{i=0}^n$ such that $\mathcal{P}_n=\mathcal{P}$. Thus, to establish Theorem~\ref{thm:contact}, we must show that for any contact sequence $\{\mathcal{P}_i\}_{i=0}^n$, if $\mathcal{P}_n=\mathcal{P}$, then $\mathfrak{g}_A(\mathcal{P})$ is contact. To do this, we show that $\varphi_{\mathcal{P}}$, as defined in Definition~\ref{def:contactfun}, is a contact structure on $\mathfrak{g}_A(\mathcal{P})$. 

We first establish that $\varphi_{\mathcal{P}}$ is a regular one-form on $\mathfrak{g}_A(\mathcal{P})$ (see Theorem \ref{thm:regular}). We require the following preliminary lemma.

\begin{lemma}\label{lem:extrel}
Let $\varphi_{\mathcal{P}}=\sum c_{i,j}E^*_{i,j}\in (\mathfrak{g}_A(\mathcal{P}))^*$ be the functional of Definition~\ref{def:contactfun}. If $L\in \text{ker }(B_{\varphi_{\mathcal{P}}})$, then $E_{i,j}^*(L)=0$, for all $i\neq j\in \mathcal{P}$ such that $c_{i,j}\neq 0$.
\end{lemma}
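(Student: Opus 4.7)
The plan is to proceed by induction on the length $n$ of the contact sequence. For each step we compute $\varphi_{\mathcal{P}}([L,E_{p,q}])$ against a carefully chosen basis element $E_{p,q}$ of $\mathfrak{g}_A(\mathcal{P})$ and use the fact that $L\in\ker B_{\varphi_{\mathcal{P}}}$ forces this quantity to be zero. The equations thus produced are the linear constraints that must annihilate $E^*_{i,j}(L)$ for each $(i,j)$ with $c_{i,j}\neq 0$.

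For the base case $\mathcal{P}_0=\mathcal{P}(1,1,1)$ with $\varphi_{\mathcal{P}_0}=E^*_{2,2}+E^*_{1,3}+E^*_{2,3}$, writing $L=\sum a_{k,l}E_{k,l}$ as a general element of $\mathfrak{g}_A(\mathcal{P}_0)$ and computing $\varphi_{\mathcal{P}_0}([L,E_{p,q}])$ for $(p,q)\in\{(1,2),(2,3),(1,3)\}$ together with the diagonal commutators yields a small linear system whose solution directly gives $a_{1,3}=a_{2,3}=0$ and $a_{2,2}-a_{3,3}=0$ (which is $E^*_{2,2}(L)=0$ when one uses the trace-zero condition). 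This handles the three nonzero coefficients of $\varphi_{\mathcal{P}_0}$.

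For the inductive step, assume the claim holds for $\mathcal{P}_{j-1}$ and let $\mathcal{P}_j$ be obtained by adjoining $\mathcal{S}_j$ via some rule in $\{A_1,A_2,C,D_1,D_2,F\}$. The argument splits into cases by $(\mathcal{S}_j,\text{rule})$. In each case I would (i) identify the new nonzero coefficients of $\varphi_{\mathcal{P}_j}$, which are precisely the terms listed in Definition~\ref{def:contactfun}, and (ii) produce, for each such new coefficient $c_{i,j}$, a basis element $E_{p,q}\in\mathfrak{g}_A(\mathcal{P}_j)$ for which the equation $\varphi_{\mathcal{P}_j}([L,E_{p,q}])=0$ involves $E^*_{i,j}(L)$ with a nonzero coefficient, the other terms being either (a) already known to vanish by the inductive hypothesis or (b) handled by previously solved new equations. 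For instance, for rule $F$ with $\mathcal{S}_j=\mathcal{P}(1,1,2)=\{x_j,m_j,y_j,z_j\}$ (where $x_j$, $y_j$, $z_j$ are identified with elements of $\mathcal{P}_{j-1}$), the only new term is $E^*_{m_j,z_j}$; one tests against $E_{m_j,z_j}$ and against the covering edges of $\mathcal{S}_j$ that connect $m_j$ to $y_j,z_j,x_j$ to isolate $a_{m_j,z_j}$. Rules $A_1,A_2,C,D_1,D_2$ are handled analogously, in each case by pairing the freshly introduced $E^*_{\bullet,\bullet}$ terms with the commutators that most directly constrain them.

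The main obstacle is bookkeeping: one must verify in each case that the new constraint equations are consistent and solvable, and crucially that they do not entangle with the coordinates from $\mathcal{P}_{j-1}$ in a way that invalidates the inductive hypothesis. The reason this works is structural: in a gluing built from rules in $\{A_1,A_2,C,D_1,D_2,F\}$, a newly adjoined edge of $\mathcal{S}_j$ can only interact with elements of $\mathcal{P}_{j-1}$ through the identified extrema $x_j,y_j,z_j$, and the corresponding commutators lie in the span of either old basis elements (handled inductively) or new basis elements whose coefficients we are currently solving for. A clean way to present this is a short table listing, for each of the twelve $(\mathcal{S}_j,\text{rule})$-pairs, the test basis element used for each new coefficient and the resulting equation, after which the conclusion is immediate from the induction hypothesis.
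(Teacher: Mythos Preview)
Your approach differs substantially from the paper's, and as written the induction is not well-posed. You induct on the length of the contact sequence, assuming the lemma for $\mathcal{P}_{j-1}$ and proving it for $\mathcal{P}_j$; but the inductive hypothesis concerns elements of $\ker(B_{\varphi_{\mathcal{P}_{j-1}}})\subset\mathfrak{g}_A(\mathcal{P}_{j-1})$, whereas at step $j$ you are handed $L\in\ker(B_{\varphi_{\mathcal{P}_j}})\subset\mathfrak{g}_A(\mathcal{P}_j)$, a different algebra. There is no evident restriction of $L$ to $\mathcal{P}_{j-1}$ landing in the smaller kernel, so ``already known to vanish by the inductive hypothesis'' is not justified. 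Your off-diagonal test elements also do not cleanly isolate the target entries: in your own rule-$F$ example with $\mathcal{S}_j=\mathcal{P}(1,1,2)$, testing against $E_{x_j,m_j}$ gives only $E^*_{m_j,y_j}(L)+E^*_{m_j,z_j}(L)=0$, and testing against $E_{m_j,y_j}$ or $E_{m_j,z_j}$ drags in coefficients $c_{i,y_j}$, $c_{i,z_j}$ coming from other gluings at the shared extremal vertices. (There is also a slip in your base case: $E^*_{2,2}(L)=0$ is false---the kernel of $B_{\varphi_{\mathcal{P}_0}}$ is spanned by $E_{1,1}+E_{3,3}-2E_{2,2}+3E_{1,2}$---though since the lemma only concerns $i\neq j$ this claim is superfluous anyway.)

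The paper sidesteps all of this by working directly in the fixed $\mathcal{P}=\mathcal{P}_n$ and exploiting the \emph{support graph} $T_0$ of $\varphi_{\mathcal{P}}$: vertex set $\mathcal{P}$, edges $\{p,q\}$ whenever $c_{p,q}\neq 0$. One checks $T_0$ is a tree (its restriction to $Ext(\mathcal{P})$ is the Hasse diagram of $\mathcal{P}_{Ext(\mathcal{P})}$, and each non-extremal $p$ is a leaf attached to a single extremal neighbour). Then one peels leaves: if $p$ is a leaf with unique neighbour $q$, testing against the \emph{diagonal} element $E_{p,p}-\tfrac{1}{|\mathcal{P}|}\sum_{k}E_{k,k}$ yields exactly $\pm E^*_{p,q}(L)=0$, because the only support edge incident to $p$ is $\{p,q\}$. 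Delete that edge and repeat until the tree is exhausted. No induction on the sequence and no case analysis over the twelve $(\mathcal{S}_j,\text{rule})$ pairs is needed; only the tree property is used.
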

\begin{proof}
To establish the result, we provide a procedure which allows one to show that $E_{i,j}^*(L)=0$, for all $i\neq j\in \mathcal{P}$ such that $c_{i,j}\neq 0$, in $|Rel_E(\mathcal{P})|+|\mathcal{P}\backslash Ext(\mathcal{P})|$ steps. 

Define the graph $T_0$ with vertex set $\{p~|~p\in\mathcal{P}\}$ and edge set $\{\{p,q\}~|~c_{p,q}\neq 0\}$. Such a graph $T_0$ has been called the ``directed graph" of the one-form $\varphi_{\mathcal{P}}$ \textup(see \textbf{\cite{G2}}\textup), and it shall serve as a book-keeping device in the procedure that follows. In particular, once we show that $E_{i,j}^*(L)=0,$ we remove edge $\{i,j\}$ from $T_0,$ yielding a new graph $T_1,$ and so on.

We claim that $T_0$ is a tree. To see this, first note that the subgraph of $T_0$ corresponding to the elements of $Ext(\mathcal{P})$ is isomorphic to the Hasse diagram of $\mathcal{P}_{{Ext(\mathcal{P})}}$, which is a tree by Theorem~\ref{lem:cycle}. Now, by the definition of $\varphi_{\mathcal{P}}$, for each element $p\in \mathcal{P}\backslash Ext(\mathcal{P})$ there exists a unique $q\in \mathcal{P}$ -- in particular, $q\in Ext(\mathcal{P})$ -- such that $c_{p,q}$ or $c_{q,p}\neq 0$. Thus, the claim is established. 

The procedure is outlined in the following steps.
\\*

\noindent
\textbf{Step 1:} Since $T_0$ is a tree, there exists a free vertex. Without loss of generality, assume that the free vertex corresponds to $p_0\in \mathcal{P}$ with $p_0\prec q_0$ and $c_{p_0,q_0}\neq 0$. Then
$$\varphi_{\mathcal{P}}\bigg(\bigg[E_{p_0,p_0}-\frac{1}{|\mathcal{P}|}\sum_{p\in\mathcal{P}}E_{p,p},L\bigg]\bigg)=\pm E^*_{p_0,q_0}(L)=0.$$ Remove the edge $\{p_0, q_0\}$ in $T_0$, resulting in a tree with one less edge, denoted $T_1$.
\\*

\noindent
\textbf{Step $\mathbf{m}$:} Consider the graph $T_{m-1}$ formed in Step $m-1$. Since $T_{m-1}$ is a tree, there exists a free vertex. Without loss of generality, assume that the free vertex corresponds to $p_m\in \mathcal{P}$ with $p_m\prec q_m$ and $c_{p_m,q_m}\neq 0$. Then, taking into account the entries of $L$ already shown to be equal to zero in steps 1 through $m-1$,
$$\varphi_{\mathcal{P}}\bigg(\bigg[E_{p_m,p_m}-\frac{1}{|\mathcal{P}|}\sum_{p\in\mathcal{P}}E_{p,p},L\bigg]\bigg)=\pm E^*_{p_m,q_m}(L)=0.$$ Remove the edge $\{p_m, q_m\}$ in $T_{m-1}$, resulting in a tree with one less edge, denoted $T_m$.
\\*

\noindent
Since $T_0$ is a tree consisting of $|Rel_E(\mathcal{P})|+|\mathcal{P}\backslash Ext(\mathcal{P})|$ edges, the above procedure terminates with an empty graph in $|Rel_E(\mathcal{P})|+|\mathcal{P}\backslash Ext(\mathcal{P})|$ steps. The result follows.
\end{proof}

\begin{theorem}\label{thm:regular}
If $\{\mathcal{P}_i\}_{i=0}^n$ is a contact sequence, $\mathcal{P}=\mathcal{P}_n$, and $\mathfrak{g}=\mathfrak{g}_A(\mathcal{P})$, then the functional $\varphi_{\mathcal{P}}\in\mathfrak{g}^*$ given in Definition~\ref{def:contactfun} is a regular one-form on $\mathfrak{g}$; that is, $\dim\ker(B_{\varphi_{\mathcal{P}}})=1.$
\end{theorem}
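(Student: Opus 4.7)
The plan is to induct on the length $n$ of the contact sequence $\{\mathcal{P}_i\}_{i=0}^n$, using the convention established in the remark following Definition~\ref{def:contactfun} that $\mathcal{P}_0 = \mathcal{P}(1,1,1) = \{1,2,3\}$ with $1\prec 2\prec 3$. For the base case $n=0$, I would compute $[B_{\varphi_{\mathcal{P}_0}}]$ directly in the five-dimensional algebra $\mathfrak{g}_A(\mathcal{P}(1,1,1))$ with respect to the ordered basis $\{D_{1,2},D_{1,3},E_{1,2},E_{1,3},E_{2,3}\}$, where $D_{1,p}=E_{1,1}-E_{p,p}$. A short calculation with $\varphi_{\mathcal{P}_0} = E^*_{2,2}+E^*_{1,3}+E^*_{2,3}$ shows that the resulting $5\times 5$ skew-symmetric matrix has rank $4$, so $\dim\ker(B_{\varphi_{\mathcal{P}_0}})=1$.

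For the inductive step, assume $\dim\ker(B_{\varphi_{\mathcal{P}_{j-1}}})=1$. Because each gluing rule in $\{A_1,A_2,C,D_1,D_2,F\}$ contributes $\ind\mathfrak{g}_A(\mathcal{S}_j)$ to the index of $\mathfrak{g}_A(\mathcal{P}_{j-1})$ by Theorem~\ref{lem:table}, and a direct application of Theorem~\ref{thm:gind} gives $\ind\mathfrak{g}_A(\mathcal{S}_j)=0$ for each remaining building block $\mathcal{S}_j\in\{\mathcal{P}(1,1),\mathcal{P}(1,1,2),\mathcal{P}(2,1,1)\}$, we conclude $\ind\mathfrak{g}_A(\mathcal{P}_j)=1$, so $\dim\ker(B_{\varphi_{\mathcal{P}_j}})\ge 1$. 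To establish the reverse inequality, take $L\in\ker(B_{\varphi_{\mathcal{P}_j}})$ and apply Lemma~\ref{lem:extrel} to force $E^*_{p,q}(L)=0$ for every $(p,q)$ with $c_{p,q}\ne 0$; equivalently, every off-diagonal entry of $L$ indexed by an edge of the directed graph $T_0$ of $\varphi_{\mathcal{P}_j}$ vanishes.

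The entries of $L$ not already killed by Lemma~\ref{lem:extrel} fall into two classes: (i) off-diagonal coordinates $l_{p,q}$ attached to the relations $p\prec q$ inside $\mathcal{S}_j$ that are not edges of $T_0$, and (ii) diagonal coordinates $l_{p,p}$ for the new vertices $p\in\mathcal{S}_j\setminus\mathcal{P}_{j-1}$. Case-by-case over the admissible pairs (gluing rule, building block), I would then select, for each remaining coordinate, a single bracket $\varphi_{\mathcal{P}_j}([L,E])=0$ with $E$ one of the new basis elements introduced by $\mathcal{S}_j$; this pins down the new coordinate in terms of entries of $L$ already present in the old subalgebra $\mathfrak{g}_A(\mathcal{P}_{j-1})$, after adjusting for the trace normalizations of $\mathfrak{g}_A(\mathcal{P}_j)$ versus $\mathfrak{g}_A(\mathcal{P}_{j-1})$. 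The inductive hypothesis then bounds the old entries to a one-parameter family, so $L$ lies in a one-dimensional space.

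The principal obstacle is the case analysis. Top-to-bottom reflection of the Hasse diagram pairs $A_1$ with $A_2$, $D_1$ with $D_2$, and $\mathcal{P}(1,1,2)$ with $\mathcal{P}(2,1,1)$, roughly halving the work, but each surviving case still demands identifying the correct auxiliary bracket that kills each new coordinate and carefully tracking the trace correction between $\mathfrak{g}_A(\mathcal{P}_{j-1})$ and $\mathfrak{g}_A(\mathcal{P}_j)$. A secondary subtlety: for rule $F$, the new block $\mathcal{S}_j$ contributes no coordinates of type (i), so the diagonal equations must carry all the inductive weight, while for rule $C$ with $\mathcal{S}_j=\mathcal{P}(1,1,2)$ or $\mathcal{P}(2,1,1)$ several transitive off-diagonal coordinates of type (i) appear and must be eliminated via brackets with the new rank-one basis elements before the diagonal argument can be applied.
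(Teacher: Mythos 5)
Your proposal shares the paper's two essential ingredients -- Lemma~\ref{lem:extrel} to kill the entries of $L$ sitting on the support of $\varphi_{\mathcal{P}}$, followed by a block-by-block analysis of the relations $\varphi_{\mathcal{P}}([x,L])=0$ for the basis elements $E_{p,q}$ coming from each $\mathcal{S}_j$ -- but you organize the argument as an induction on the length of the contact sequence, whereas the paper argues directly on $\mathcal{P}=\mathcal{P}_n$ all at once. In the paper, the bracket relations are simply sorted into four groups (the initial $\mathcal{P}(1,1,1)$, and the adjoined $\mathcal{P}(1,1)$, $\mathcal{P}(2,1,1)$, $\mathcal{P}(1,1,2)$ blocks); each group forces the remaining off-diagonal entries of $L$ supported on that block to vanish and forces the diagonal entries within the block to agree, after which connectedness of $\mathcal{P}$ chains all diagonal entries of $L$ together and the trace-zero condition collapses everything to a one-dimensional span -- no inductive hypothesis is ever invoked. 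The direct route buys you something concrete: you never have to certify that the truncation of $L$ to the coordinates of $\mathfrak{g}_A(\mathcal{P}_{j-1})$ is itself a kernel element of $B_{\varphi_{\mathcal{P}_{j-1}}}$. That certification is the soft spot of your inductive step: the old part of $L$ is not trace-zero in $\mathfrak{sl}(|\mathcal{P}_{j-1}|)$, so it must be adjusted by a central element, and one must check that for every old basis vector $x$ the identity $\varphi_{\mathcal{P}_j}([x,L])=0$ really descends to $\varphi_{\mathcal{P}_{j-1}}([x,L_{\mathrm{old}}])=0$, i.e., that the cross terms involving the new summands of $\varphi_{\mathcal{P}_j}$ and the new entries of $L$ at the identified extremal vertices all vanish. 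This can be done (the new relations of $\mathcal{S}_j$ always involve a genuinely new vertex, so the cross terms are controlled), but it is extra bookkeeping that your writeup asserts rather than proves, and it is precisely the work the paper's global argument sidesteps by deferring the diagonal bookkeeping to a single connectedness-plus-trace step at the end. Also note that the paper does not need your index-based lower bound $\dim\ker(B_{\varphi_{\mathcal{P}}})\geq 1$: the constraints leave an explicit nonzero element in the kernel, which gives both inequalities at once.
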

\begin{proof}
Let $\varphi_{\mathcal{P}}=\sum c_{i,j}E^*_{i,j}\in \mathfrak{g}^*$ be the functional of Definition~\ref{def:contactfun} and $L\in \ker(B_{\varphi_{\mathcal{P}}})$. By Lemma~\ref{lem:extrel},  $E_{i,j}^*(L)=0$, for all $i\neq j\in \mathcal{P}$ such that $c_{i,j}\neq 0$. To completely determine the form of $L$, we will consider the restrictions placed on the entries of $L$ by the relations $\varphi_{\mathcal{P}}([x, L])=0$, for $$x\in\mathscr{B}(\mathfrak{g})=\{E_{1,1}-E_{p,p}~|~1<p\le |P|\}\cup\{E_{p,q}~|~p,q\in\mathcal{P},p\prec q\}.$$
To start, note that for $x\in \{E_{1,1}-E_{p,p}~|~1<p\le |P|\}$, one has 
\begin{equation}\label{eq:sum}
\varphi_{\mathcal{P}}([x, L])=\sum d_{i,j}E^*_{i,j}(L),
\end{equation}
for $d_{i,j}\in\mathbf{k}$, where the sum is over pairs $(i,j)$ for which $c_{i,j}\neq 0$. Considering Lemma~\ref{lem:extrel}, all such entries of $L$ occurring in (\ref{eq:sum}) must be equal to 0.

Relations of the form $\varphi_{\mathcal{P}}([x, L])=0$, for $x\in \{E_{p,q}~|~p,q\in\mathcal{P},p\prec q\}$, we break into four groups.
\\*

\noindent
\textbf{Group 1:} $E_{p,q}$, for $p,q\in \{1,2,3\}$.
\begin{itemize}
    \item $\varphi_{\mathcal{P}}([E_{1,2}, L])=E^*_{2,3}(L)=0$;
    \item $\varphi_{\mathcal{P}}([E_{2,3}, L])=-E^*_{1,2}(L)+E^*_{3,3}(L)-E^*_{2,2}(L)=0$;
    \item $\varphi_{\mathcal{P}}([E_{1,3}, L])=E^*_{3,3}(L)-E^*_{1,1}(L)=0$.
\end{itemize}
The conditions above imply that $E^*_{2,3}(L)=0$ and $$E^*_{1,1}(L)=E^*_{3,3}(L)=E^*_{1,2}(L)+E^*_{2,2}(L).$$
\medskip

\noindent
\textbf{Group 2:} $E_{p,q}$, for $p,q\in \{x_i,y_i\}$ such that $\mathcal{P}_i$ is obtained from $\mathcal{P}_{i-1}$ by adjoining $\mathcal{S}_i=\{x_i,y_i\}$ with $x_i\prec y_i$.
\begin{itemize}
    \item $B_{\varphi_{\mathcal{P}}}([E_{x_i,y_i}, L])=E^*_{y_i,y_i}(L)-E^*_{x_i,x_i}(L)=0$.
\end{itemize}
The conditions above imply that $$E^*_{x_i,x_i}(L)=E^*_{y_i,y_i}(L).$$
\medskip

\noindent
\textbf{Group 3:} $E_{p,q}$, for $p,q\in \{x_i,m_i,y_i,z_i\}$ such that $\mathcal{P}_i$ is obtained from $\mathcal{P}_{i-1}$ by adjoining $\mathcal{S}_i=\{x_i,m_i,y_i,z_i\}$ with $y_i,z_i\prec m_i\prec x_i$.
\begin{itemize}
    \item $B_{\varphi_{\mathcal{P}}}([E_{y_i,m_i}, L])=E^*_{m_i,x_i}(L)=0$;
    \item $B_{\varphi_{\mathcal{P}}}([E_{y_i,x_i}, L])=E^*_{x_i,x_i}(L)-E^*_{y_i,y_i}(L)=0$;
    \item $B_{\varphi_{\mathcal{P}}}([E_{z_i,m_i}, L])=E^*_{m_i,x_i}(L)+E^*_{m_i,m_i}(L)-E^*_{z_i,z_i}(L)=0$;
    \item $B_{\varphi_{\mathcal{P}}}([E_{z_i,x_i}, L])=E^*_{x_i,x_i}(L)-E^*_{z_i,z_i}(L)=0$;
    \item $B_{\varphi_{\mathcal{P}}}([E_{m_i,x_i}, L])=-E^*_{y_i,m_i}(L)-E^*_{z_i,m_i}(L)=0$.
\end{itemize}
The conditions above, along with Lemma~\ref{lem:extrel}, imply that $$E^*_{y_i,m_i}(L)=E^*_{z_i,m_i}(L)=E^*_{m_i,x_i}(L)=0$$ and $$E^*_{x_i,x_i}(L)=E^*_{m_i,m_i}(L)=E^*_{y_i,y_i}(L)=E^*_{z_i,z_i}(L).$$
\medskip

\noindent
\textbf{Group 4:} $E_{p,q}$, for $p,q\in \{x_i,m_i,y_i,z_i\}$ such that $\mathcal{P}_i$ is obtained from $\mathcal{P}_{i-1}$ by adjoining $\mathcal{S}_i=\{x_i,m_i,y_i,z_i\}$ with $x_i\prec m_i\prec y_i,z_i$.
\begin{itemize}
    \item $B_{\varphi_{\mathcal{P}}}([E_{x_i,m_i}, L])=E^*_{m_i,y_i}(L)+E^*_{m_i,z_i}(L)=0$;
    \item $B_{\varphi_{\mathcal{P}}}([E_{x_i,y_i}, L])=E^*_{y_i,y_i}(L)-E^*_{x_i,x_i}(L)=0$;
    \item $B_{\varphi_{\mathcal{P}}}([E_{x_i,z_i}, L])=E^*_{z_i,z_i}(L)-E^*_{x_i,x_i}(L)=0$;
    \item $B_{\varphi_{\mathcal{P}}}([E_{m_i,y_i}, L])=-E^*_{x_i,m_i}(L)=0$;
    \item $B_{\varphi_{\mathcal{P}}}([E_{m_i,z_i}, L])=-E^*_{x_i,m_i}(L)+E^*_{z_i,z_i}(L)-E^*_{m_i,m_i}(L)=0$.
\end{itemize}
The conditions above, along with Lemma~\ref{lem:extrel}, imply $$E^*_{x_i,m_i}(L)=E^*_{m_i,y_i}(L)=E^*_{m_i,z_i}(L)=0$$ and $$E^*_{x_i,x_i}(L)=E^*_{m_i,m_i}(L)=E^*_{y_i,y_i}(L)=E^*_{z_i,z_i}(L).$$
\medskip

\noindent
Hence, as a result of the conditions found on the entries of $L$ above, the fact that $L\in\mathfrak{sl}(|\mathcal{P}|)$, and the connectedness of $\mathcal{P}$, we find that $$\ker(B_{\varphi_{\mathcal{P}}})=\text{span}\left\{\sum_{2\neq p\in\mathcal{P}}E_{p,p}+(1-|\mathcal{P}|)E_{2,2}+|P|E_{1,2}\right\}.$$ Thus, since $\dim \ker(B_{\varphi_{\mathcal{P}}})=1,$ $\varphi_{\mathcal{P}}$ is regular.
\end{proof}

Now, to establish Theorem~\ref{thm:contact}, we show that $\varphi_{\mathcal{P}}$ is a contact form on $\mathfrak{g}=\mathfrak{g}_A(\mathcal{P})$. Take $\mathscr{B}(\mathfrak{g})$ to be a basis for $\mathfrak{g}$ which contains the element $$L=\sum_{2\neq p\in\mathcal{P}}E_{p,p}+(1-|\mathcal{P}|)E_{2,2}+|P|E_{1,2}\in \ker(B_{\varphi_{\mathcal{P}}}).$$ Set $\left[B_{\varphi_{\mathcal{P}}}\right]=\varphi_{\mathcal{P}}\left(C(\mathfrak{g},\mathscr{B}(\mathfrak{g}))\right)$ and $\left[\widehat{B}_{\varphi_{\mathcal{P}}}\right]=\varphi_{\mathcal{P}}\left(\widehat{C}(\mathfrak{g},\mathscr{B}(\mathfrak{g}))\right)$. Note that $\left[B_{\varphi_{\mathcal{P}}}\right]$ has rank $\dim\mathfrak{g}-1$, since $\ind\mathfrak{g}=1$, and has a single zero row and column corresponding to $L\in \ker(B_{\varphi_{\mathcal{P}}})$; denote by $\left[B'_{\varphi_{\mathcal{P}}}\right]$ the submatrix of full rank obtained from $\left[B_{\varphi_{\mathcal{P}}}\right]$ by removing the zero row and column corresponding to $L$. Now, computing the determinant of $\left[\widehat{B}_{\varphi_{\mathcal{P}}}\right]$ by expanding on row $\mathbf{L}$ followed by column $\mathbf{L}$, we have 
$$\det \left(\left[\widehat{B}_{\varphi_{\mathcal{P}}}\right]\right)=\varphi_{\mathcal{P}}(L)^2\det\left(\left[B'_{\varphi_{\mathcal{P}}}\right]\right)=(1-|\mathcal{P}|)^2\det\left(\left[B'_{\varphi_{\mathcal{P}}}\right]\right)\neq 0.$$ Therefore, $\mathfrak{g}$ is contact with contact form $\varphi_{\mathcal{P}}$ by Theorem~\ref{thm:det}. The result follows.\qed

\section{Rigidity}\label{sec:rigid}

In this section, we prove the rigidity result noted in the introduction (see Theorem \ref{thm:main2}). The proof depends on the following result of Coll and Gerstenhaber, which itself is a corollary to their more general theorem regarding Lie semi-direct products, for which type-A Lie poset algebras are the prime example.  To set the notation, let $\mathfrak{g}_A(\mathcal{P})$ be as above, $\mathfrak{h}$ be the standard Cartan subalgebra of $\mathfrak{g}_A(\mathcal{P})$ with linear dual $\mathfrak{h}^*$, $\mathfrak{c}=Z(\mathfrak{g}_A(\mathcal{P}))$, and the $H^i$'s designate cohomology classes of Chevalley-Eilenberg or simplicial type, depending on whether the first argument is a Lie algebra or a simplicial complex, respectively.

\begin{theorem}[Coll and Gerstenhaber \textbf{\cite{CG}}, 2017] \label{CG}
\[H^2(\mathfrak{g}_A(\mathcal{P}),\mathfrak{g}_A(\mathcal{P}))=\left(\bigwedge\nolimits^{\!2}\mathfrak{h}^{*}\bigotimes\mathfrak{c}\right)\quad\bigoplus\quad\left(\mathfrak{h}^{*}\bigotimes H^1(\Sigma(\mathcal{P}),\mathbf{k})\right)\quad\bigoplus\quad H^2(\Sigma(\mathcal{P}),\mathbf{k})\]
\end{theorem}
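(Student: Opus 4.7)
The plan is to apply the Hochschild--Serre spectral sequence to the semi-direct product decomposition $\mathfrak{g}_A(\mathcal{P}) = \mathfrak{h} \ltimes \mathfrak{n}$, where $\mathfrak{h}$ is the (trace-zero) standard Cartan and $\mathfrak{n} = \mathrm{span}\{E_{i,j} : i \prec j\}$ is the nilpotent ideal of off-diagonal matrix units attached to strict relations. The associated extension $0 \to \mathfrak{n} \to \mathfrak{g}_A(\mathcal{P}) \to \mathfrak{h} \to 0$ yields a spectral sequence
\begin{equation*}
E_2^{p,q} = H^p(\mathfrak{h},\, H^q(\mathfrak{n}, \mathfrak{g}_A(\mathcal{P}))) \;\Longrightarrow\; H^{p+q}(\mathfrak{g}_A(\mathcal{P}), \mathfrak{g}_A(\mathcal{P})).
\end{equation*}
Because $\mathfrak{h}$ is abelian and acts semisimply on every module in sight, a standard computation gives $H^p(\mathfrak{h}, M) = \bigwedge^p \mathfrak{h}^* \otimes M^{\mathfrak{h}}$ for any $\mathfrak{h}$-module $M$, so the $E_2$ page reduces to the weight-zero parts of $H^q(\mathfrak{n}, \mathfrak{g}_A(\mathcal{P}))$ wedged with powers of $\mathfrak{h}^*$.

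The substantive step is to identify the weight-zero (under the $\mathfrak{h}$-action) part of $H^q(\mathfrak{n}, \mathfrak{g}_A(\mathcal{P}))$ for $q = 0, 1, 2$. The Chevalley--Eilenberg complex $\bigwedge^{\bullet} \mathfrak{n}^* \otimes \mathfrak{g}_A(\mathcal{P})$ splits under the Cartan action into weight eigenspaces, and I expect the weight-zero subcomplex to be isomorphic to the simplicial cochain complex of $\Sigma(\mathcal{P})$ --- this is a Lie-algebraic avatar of the classical Gerstenhaber--Schack identification for incidence algebras. Concretely, the target is
\begin{equation*}
H^q(\mathfrak{n}, \mathfrak{g}_A(\mathcal{P}))^{\mathfrak{h}} \;\cong\; \begin{cases} \mathfrak{c}, & q = 0, \\ H^q(\Sigma(\mathcal{P}), \mathbf{k}), & q \geq 1. \end{cases}
\end{equation*}
The $q=0$ case is a direct weight check: the $\mathfrak{n}$-invariants of weight zero consist of diagonal matrices $\sum a_k E_{k,k}$ with $a_p = a_q$ whenever $p \prec q$, i.e., trace-zero diagonal matrices constant on connected components of the Hasse diagram, and this is precisely $\mathfrak{c}$.

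Granting the identification for $q \geq 1$, assembly of $E_2$ in total degree two yields the three summands
\begin{equation*}
E_2^{2,0} \oplus E_2^{1,1} \oplus E_2^{0,2} = \Bigl(\bigwedge\nolimits^{2} \mathfrak{h}^{*} \otimes \mathfrak{c}\Bigr) \oplus \Bigl(\mathfrak{h}^{*} \otimes H^1(\Sigma(\mathcal{P}), \mathbf{k})\Bigr) \oplus H^2(\Sigma(\mathcal{P}), \mathbf{k}),
\end{equation*}
matching the stated decomposition. Degeneration at $E_2$ in this total degree follows from the fact that any differential $d_r$ for $r \geq 2$ would have to mix distinct weight-zero eigenspaces across $\mathfrak{h}$-degrees, which is obstructed by the semisimplicity of the Cartan action and the compatibility of the Koszul differential on $\bigwedge^{\bullet} \mathfrak{h}^*$ with the splitting.

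The hard part will be the identification of the weight-zero subcomplex of $\bigwedge^{\bullet} \mathfrak{n}^* \otimes \mathfrak{g}_A(\mathcal{P})$ with the simplicial cochain complex of $\Sigma(\mathcal{P})$. This requires careful combinatorial bookkeeping: a weight-zero wedge $E_{i_1, j_1}^* \wedge \cdots \wedge E_{i_q, j_q}^*$ paired with a target of compensating weight corresponds to a configuration of relations in $\mathcal{P}$ assembling into a $q$-chain, and one must verify that the Chevalley--Eilenberg coboundary matches the simplicial coboundary on $\Sigma(\mathcal{P})$ under this correspondence. In the associative (Hochschild) setting this identification is due to Gerstenhaber and Schack; the Coll--Gerstenhaber contribution is precisely its transfer to the Lie cohomology framework, which is accomplished by comparing the spectral sequence above with its Hochschild counterpart and tracking coefficients through the anti-symmetrization map.
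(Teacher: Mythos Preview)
The paper does not prove this theorem; it is quoted verbatim from \textbf{\cite{CG}} (Coll and Gerstenhaber, \textit{J.~Lie Theory}, 2016) and invoked as a black box to deduce rigidity in Section~\ref{sec:rigid}. There is thus no in-paper argument to compare against.

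That said, your outline is the natural route and matches the strategy of the cited source: the Coll--Gerstenhaber result is indeed obtained from the Hochschild--Serre spectral sequence for the split extension $0 \to \mathfrak{n} \to \mathfrak{g}_A(\mathcal{P}) \to \mathfrak{h} \to 0$, with the key input being the identification of the $\mathfrak{h}$-weight-zero part of $H^q(\mathfrak{n},\mathfrak{g}_A(\mathcal{P}))$ with simplicial cohomology of $\Sigma(\mathcal{P})$, transported from the Gerstenhaber--Schack theorem in the Hochschild setting. Your computation of the $E_2$ page and the $q=0$ identification with $\mathfrak{c}$ are correct. The one place your sketch is thin is the degeneration step: the phrase ``would have to mix distinct weight-zero eigenspaces across $\mathfrak{h}$-degrees'' is not a mechanism that by itself forces $d_r=0$. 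What actually drives degeneration is that the extension is split and $\mathfrak{h}$ acts semisimply in characteristic zero, so the full Chevalley--Eilenberg complex decomposes as the tensor product of the Koszul complex $\bigwedge^{\bullet}\mathfrak{h}^*$ (with zero internal differential) and the $\mathfrak{h}$-invariant part of $C^{\bullet}(\mathfrak{n},\mathfrak{g}_A(\mathcal{P}))$, the nonzero-weight summands being acyclic by the Koszul argument you already used; this gives $E_2=E_\infty$ directly. If you intend this as a standalone proof rather than a road map, that step should be stated and justified explicitly.
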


Observe that the necessary and sufficient conditions for a type-A Lie poset algebra to be absolutely rigid, i.e., to have no infinitesimal deformations, is the simultaneous vanishing of $(\bigwedge^2\mathfrak{h}^*\bigotimes\mathfrak{c})$, $(\mathfrak{h}^*\bigotimes H^1(\Sigma(\mathcal{P}),\mathbf{k}))$, and $H^2(\Sigma(\mathcal{P}),\mathbf{k})$. 

As we are only considering type-A Lie poset algebras corresponding to connected posets, $\mathfrak{c}$ is trivial as is shown in the lemma below. 

\begin{lemma}
If $\mathcal{P}$ is a connected poset, then $Z(\mathfrak{g}_A(\mathcal{P}))$ is trivial.
\end{lemma}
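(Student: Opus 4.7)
The plan is to show that any $X = \sum_{i \preceq j} c_{i,j} E_{i,j} \in Z(\mathfrak{g}_A(\mathcal{P}))$ must vanish in two stages: first forcing $X$ to be diagonal, then forcing its diagonal entries to all coincide, so that the trace-zero condition yields $X = 0$.

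First I would test $X$ against the trace-zero diagonal elements $H_{r,s} = E_{r,r} - E_{s,s} \in \mathfrak{g}_A(\mathcal{P})$ for $r \neq s$. A direct computation gives $[H_{r,s}, E_{i,j}] = (\delta_{i,r} - \delta_{j,r} - \delta_{i,s} + \delta_{j,s})\, E_{i,j}$, so the coefficient of $E_{i,j}$ in $[H_{r,s}, X]$ is $c_{i,j}$ times this scalar. Specializing $r = i$, $s = j$ for any strict relation $i \prec j$ produces a factor of $2 c_{i,j}$, forcing $c_{i,j} = 0$ for every off-diagonal entry. Thus $X$ reduces to a diagonal element $X = \sum_p c_p E_{p,p}$ with $\sum_p c_p = 0$.

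Next, I would test $X$ against each $E_{i,j}$ with $i \prec j$ (all of which lie in $\mathfrak{g}_A(\mathcal{P})$). Since $[E_{p,p}, E_{i,j}] = (\delta_{p,i} - \delta_{p,j}) E_{i,j}$, the bracket $[X, E_{i,j}] = (c_i - c_j) E_{i,j}$ must vanish, so $c_i = c_j$ whenever $i \prec j$. This equality propagates through every covering relation; the connectedness of the Hasse diagram of $\mathcal{P}$ then implies that every pair of elements is joined by a zig-zag of covering relations, so all the $c_p$ are equal to a common scalar $c$. The trace condition $|\mathcal{P}|\, c = 0$ forces $c = 0$, and hence $X = 0$.

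The only thing to be careful about — and it is not really an obstacle — is verifying that the ``test elements" $H_{r,s}$ and $E_{i,j}$ genuinely live in $\mathfrak{g}_A(\mathcal{P})$: the first are trace-zero diagonal matrices, and the second are basis elements of the incidence algebra (with vanishing trace since $i \neq j$). Everything else is just the two standard matrix-unit commutator computations, so the argument is short and the connectedness hypothesis is used precisely once, to propagate equality of the $c_p$ across the Hasse diagram.
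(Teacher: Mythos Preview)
Your proof is correct and follows essentially the same two-stage approach as the paper: bracket a central element with diagonal elements $E_{i,i}-E_{j,j}$ to kill the off-diagonal entries, then bracket with the $E_{i,j}$ to force all diagonal entries equal, and invoke connectedness plus the trace-zero condition to conclude. The only cosmetic difference is that the paper writes the first bracket with $\tfrac{1}{2}(E_{i,i}-E_{j,j})$ and displays the other terms explicitly, whereas you isolate the $E_{i,j}$-coefficient directly.
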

\begin{proof}
Let $z=\sum_{i\preceq j\in\mathcal{P}}z_{i,j}E_{i,j}\in Z(\mathfrak{g}_A(\mathcal{P}))$, for $z_{i,j}\in\mathbf{k}$. For $i,j\in\mathcal{P}$ such that $i\prec j$, $$\left[\frac{1}{2}(E_{i,i}-E_{j,j}),z\right]=z_{i,j}E_{i,j}+\sum_{\substack{j\neq k\in\mathcal{P} \\ i\prec k}}z'_{i,k}E_{i,k}+\sum_{\substack{k\in\mathcal{P} \\ k\prec i}}z'_{k,i}E_{k,i}+\sum_{\substack{i\neq k\in\mathcal{P} \\ k\prec j}}z'_{k,j}E_{k,j}+\sum_{\substack{k\in\mathcal{P} \\ j\prec k}}z'_{j,k}E_{j,k}=0,$$ for $z'_{i,k},z'_{k,i},z'_{k,j},z'_{j,k}\in\mathbf{k}$. Therefore, $z_{i,j}=0$, for $i,j\in\mathcal{P}$ such that $i\prec j$, and $z=\sum_{i\in\mathcal{P}}z_{i,i}E_{i,i}$. Now, note that for $i,j\in\mathcal{P}$ such that $i\prec j$, $$[z, E_{i,j}]=(z_{i,i}-z_{j,j})E_{i,j}=0,$$ i.e., $z_{i,i}=z_{j,j}$. Hence, since $\mathcal{P}$ is connected, we may conclude that $z_{i,i}=z_{j,j}$, for all $i,j\in\mathcal{P}$. Thus, since $z\in\mathfrak{sl}(|\mathcal{P}|)$, $z_{i,i}=0$, for all $i\in\mathcal{P}$. The result follows.
\end{proof}

Now, to show that $(\mathfrak{h}^*\bigotimes H^1(\Sigma(\mathcal{P}),\mathbf{k}))$ and $H^2(\Sigma(\mathcal{P}),\mathbf{k})$ are also trivial, we invoke the  Universal Coefficient Theorem, where it suffices to show that $H_n(\Sigma(\mathcal{P}),\mathbf{k})=0$ for $n=1,2$. In fact, we prove a stronger result.

\begin{theorem}\label{Nohomology}
If $\mathcal{P}$ is a connected, contact poset of height two or less, then $\Sigma(\mathcal{P})$ is contractible.
\end{theorem}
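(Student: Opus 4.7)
The plan is to induct on the length of a contact sequence $\{\mathcal{P}_i\}_{i=0}^{n}$ for $\mathcal{P}$, whose existence is guaranteed by Theorem~\ref{thm:contact}, and at each step exhibit a sequence of elementary collapses $\Sigma(\mathcal{P}_j)\searrow\Sigma(\mathcal{P}_{j-1})$. Concatenating these collapses assembles a discrete Morse function on $\Sigma(\mathcal{P})$ with a single critical $0$-cell, whence contractibility follows from the fundamental theorem of discrete Morse theory. The height-zero case reduces to a single vertex, and by Theorem~\ref{thm:h1+dh2} there are no connected contact posets of height one, so the only substantive regime is height two, which is precisely where Theorem~\ref{thm:contact} supplies a contact sequence.

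For the base case, I would verify directly that $\Sigma(\mathcal{P}_0)$ collapses to a point for each of the four possible initial building blocks $\mathcal{P}_0\in\{\mathcal{P}(1,1),\mathcal{P}(1,1,1),\mathcal{P}(1,1,2),\mathcal{P}(2,1,1)\}$: the first two give an edge and a $2$-simplex, while each of the latter two gives a pair of $2$-simplices sharing an edge---all manifestly collapsible.

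For the inductive step, I would examine each of the six allowed gluing rules $A_1,A_2,C,D_1,D_2,F$ in combination with each compatible building block $\mathcal{S}_j$. In every admissible combination, the simplices newly adjoined to $\Sigma(\mathcal{P}_{j-1})$ form a small complex attached along a contractible subcomplex of $\Sigma(\mathcal{P}_{j-1})$---a single vertex under $A_1,A_2,C$, a single edge under $D_1,D_2$, or two edges meeting at a vertex under $F$---and I would list an explicit collapse sequence eliminating exactly these new simplices. For instance, under $F$ with $\mathcal{S}_j=\mathcal{P}(1,1,2)$, the new data consist of a middle vertex $m$, edges $\{x,m\},\{m,y\},\{m,z\}$, and $2$-simplices $\{x,m,y\},\{x,m,z\}$; the collapses $(\{m,y\},\{x,m,y\})$, $(\{m,z\},\{x,m,z\})$, and $(m,\{x,m\})$ then remove every new cell, leaving $\Sigma(\mathcal{P}_{j-1})$ intact.

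The only real obstacle is the case analysis, since the collapse must be checked in each (rule, building block) pair, respecting the restrictions recorded in the remark following Theorem~\ref{lem:table}. The pattern, however, is uniform: the newly added simplices always present a free face at a newly introduced extremal vertex, and collapsing that pair reveals the remainder as a tree dangling from $\Sigma(\mathcal{P}_{j-1})$ which collapses trivially. Patching these collapses together across the entire contact sequence produces the desired collapse of $\Sigma(\mathcal{P})$ onto a single point, which establishes contractibility.
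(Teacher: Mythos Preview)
Your proposal is correct and follows essentially the same approach as the paper: both arguments reduce to the height-two case via Theorem~\ref{thm:h1+dh2}, invoke the contact-sequence characterization of Theorem~\ref{thm:contact}, and then induct along that sequence using discrete Morse theory (equivalently, elementary collapses), with the base case handled by an explicit collapse of $\Sigma(\mathcal{P}(1,1,1))$ and the inductive step treated rule by rule. One small correction: by Theorem~\ref{thm:h1+dh2} there are in fact no connected contact posets of height zero either (the Hasse diagram of $\mathcal{P}_{Ext(\mathcal{P})}$ must consist of \emph{two} disjoint trees), so that case is vacuous rather than a single vertex---but this does not affect your argument.
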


For heights zero and one, by Thoerem~\ref{thm:h1+dh2}, there are no such posets, so Theorem~\ref{Nohomology} holds vacuously. The height-two case is established by modifying the base case in the proof of Theorem 14 in \textbf{\cite{seriesA}}, which uses discrete Morse theory to show that posets of height at most two corresponding to Frobenius, type-A Lie poset algebras have contractible simplicial complexes. For the pertinent details regarding discrete Morse Theory, see \textbf{\cite{Forman}}.








\begin{proof}[Proof of Theorem~\ref{Nohomology}]
Recall from Section~\ref{sec:mainresults} that, given a height-two poset $\mathcal{P}$, $\mathfrak{g}_A(\mathcal{P})$ is contact if and only if there exists a contact sequence $\{\mathcal{P}_i\}_{i=0}^n$ such that $\mathcal{P}_n=\mathcal{P}$; that is, $\mathfrak{g}_A(\mathcal{P})$ is contact if and only if there exists a sequence of posets $\mathcal{P}_0\subset\mathcal{P}_1\subset\hdots\subset\mathcal{P}_n=\mathcal{P}$ such that
\begin{itemize}
    \item $\mathcal{P}_0$ is of the form $\mathcal{P}(1,1,1)$ and
    \item $\mathcal{P}_i$ is obtained from $\mathcal{P}_{i-1}$ and a copy of $\mathcal{P}(1,1)$, $\mathcal{P}(2,1,1)$ or $\mathcal{P}(1,1,2)$ by applying rules $A_1$, $A_2$, $C$, $D_1$, $D_2$ or $F$ of Table~\ref{tab:h2fassem}, for $0<i\le n$.
\end{itemize}
In Theorem 11 of \textbf{\cite{seriesA}}, it is shown that posets $\mathcal{P}$ of height at most two for which $\mathfrak{g}_A(\mathcal{P})$ is Frobenius can be characterized in the exact same way, except with $\mathcal{P}_0$ of the form $\mathcal{P}(1,1,2)$ or $\mathcal{P}(2,1,1)$. In Theorem 14 of \textbf{\cite{seriesA}} the authors show that such posets have contractible simplicial complexes by recursively defining a discrete Morse function with a single critical vertex contained in the simplicial complex $\Sigma(\mathcal{P}_0)$.

Here we can use a similar argument by defining an appropriate discrete Morse function on the simplicial complex of $\mathcal{P}(1,1,1)$, illustrated below.
\begin{figure}[H]
$$\begin{tikzpicture}
\node (v7) at (-4.5,1.5) [circle, draw = black, fill = black, inner sep = 0.5mm] {};
\node (v8) at (-2.5,-1.5) [circle, draw = black, fill = black, inner sep = 0.5mm] {};
\node (v9) at (-0.5,1.5) [circle, draw = black, fill = black, inner sep = 0.5mm] {};
\draw (v7) -- (v8) -- (v9)--(v7);
\node at (-2.5,0.5) {$f_1$};
\node at (-0.5,2) {$v_3$};
\node at (-1.25,-0.25) {$e_2$};
\node at (-2.5,-2) {$v_1$};
\node at (-3.75,-0.25) {$e_1$};
\node at (-4.5,2) {$v_2$};
\node at (-2.5,2) {$e_3$};
\end{tikzpicture}$$
\caption{$\Sigma(\mathcal{P}(1,1,1))$}\label{fig:Simplex}
\end{figure}
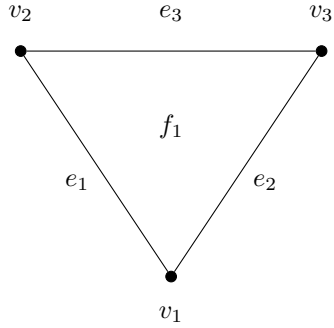

\noindent
A discrete Morse function on $\Sigma(\mathcal{P}(1,1,1))$ with a single critical simplex of $v_1$ is obtained by assigning values as follows: $f(v_1)=0$, $f(e_1)=1$, $f(v_2)=2$, $f(e_2)=3$, $f(v_3)=4$, $f(e_3)=6$, and $f(f_1)=5$. The remainder of the proof follows \textit{mutatis mutandis} to that given for Theorem 14 of \textbf{\cite{seriesA}}.
\end{proof}

We have the following immediate corollary to Theorem~\ref{Nohomology}.

\begin{corollary}
If $\mathcal{P}$ is a connected poset of height two or less for which $\mathfrak{g}_A(\mathcal{P})$ is contact, then

$$H^2(\Sigma(\mathcal{P}),\mathbf{k})=H^1(\Sigma(\mathcal{P}),\mathbf{k})=0.$$

\end{corollary}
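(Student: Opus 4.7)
The plan is to obtain this corollary as an immediate consequence of Theorem~\ref{Nohomology} together with standard facts from algebraic topology. First, I would invoke Theorem~\ref{Nohomology} to conclude that under the given hypotheses --- $\mathcal{P}$ is a connected poset of height at most two with $\mathfrak{g}_A(\mathcal{P})$ contact --- the simplicial complex $\Sigma(\mathcal{P})$ is contractible, i.e., homotopy equivalent to a point.

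Next, I would appeal to the homotopy invariance of simplicial (equivalently singular) homology: since $\Sigma(\mathcal{P}) \simeq \{*\}$, one has $H_n(\Sigma(\mathcal{P}),\mathbf{k}) = 0$ for every $n \geq 1$; in particular $H_1(\Sigma(\mathcal{P}),\mathbf{k}) = H_2(\Sigma(\mathcal{P}),\mathbf{k}) = 0$. The Universal Coefficient Theorem for cohomology (which the paper has already invoked immediately before Theorem~\ref{Nohomology}) then yields the vanishing of the corresponding cohomology groups
\[
H^1(\Sigma(\mathcal{P}),\mathbf{k}) = H^2(\Sigma(\mathcal{P}),\mathbf{k}) = 0,
\]
since $\mathbf{k}$ is a field and so the relevant $\mathrm{Ext}$ terms vanish. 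Alternatively, one may simply observe that cohomology with coefficients in $\mathbf{k}$ is itself homotopy invariant and vanishes in positive degrees on a point.

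There is essentially no obstacle here: once contractibility has been established in Theorem~\ref{Nohomology}, the corollary is a one-line consequence. The only point worth being careful about is to explicitly note the coefficient field $\mathbf{k}$ (characteristic zero, algebraically closed) so that the Universal Coefficient Theorem reduces to a bare dualization with no $\mathrm{Ext}$ contribution; this is already implicit in the paper's setup.
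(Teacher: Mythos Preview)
Your proposal is correct and matches the paper's own reasoning exactly: the paper presents this as an immediate corollary of Theorem~\ref{Nohomology}, having already noted just before that theorem that, via the Universal Coefficient Theorem, vanishing of $H_n(\Sigma(\mathcal{P}),\mathbf{k})$ for $n=1,2$ suffices. Contractibility of $\Sigma(\mathcal{P})$ gives this at once, precisely as you argue.
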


\noindent
An application of Theorem~\ref{CG} establishes the rigidity theorem noted in the introduction.

 \begin{theorem}\label{thm:main2}
A contact, type-A Lie poset algebra corresponding to a connected poset of height zero, one, or two is absolutely rigid.
\end{theorem}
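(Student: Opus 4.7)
The plan is to read off absolute rigidity directly from the Coll--Gerstenhaber decomposition in Theorem~\ref{CG}. Recall that a Lie algebra is absolutely rigid precisely when the second Chevalley--Eilenberg cohomology with coefficients in itself vanishes, so it suffices to show that each of the three summands appearing on the right-hand side of Theorem~\ref{CG} is zero under the hypotheses of the theorem.

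For the summand $\bigwedge^{2}\mathfrak{h}^{*}\otimes \mathfrak{c}$, I would invoke the lemma proved immediately after Theorem~\ref{CG}: since $\mathcal{P}$ is connected, the center $\mathfrak{c}=Z(\mathfrak{g}_A(\mathcal{P}))$ is trivial, so the whole tensor product vanishes. Note that this is the only place connectedness of $\mathcal{P}$ enters; the contact and height-at-most-two hypotheses are used for the remaining two summands.

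For the summands $\mathfrak{h}^{*}\otimes H^{1}(\Sigma(\mathcal{P}),\mathbf{k})$ and $H^{2}(\Sigma(\mathcal{P}),\mathbf{k})$, I would invoke Theorem~\ref{Nohomology}. In heights zero and one, Theorem~\ref{thm:h1+dh2} rules out connected contact posets entirely (the requirement that $\mathcal{P}_{Ext(\mathcal{P})}$ consist of two disjoint trees is incompatible with connectedness), so the statement is vacuous at those heights. In height two, the discrete Morse function constructed in the proof of Theorem~\ref{Nohomology} establishes contractibility of $\Sigma(\mathcal{P})$, and the corollary following that theorem then applies the Universal Coefficient Theorem to conclude that both $H^{1}(\Sigma(\mathcal{P}),\mathbf{k})$ and $H^{2}(\Sigma(\mathcal{P}),\mathbf{k})$ vanish, killing the remaining summands.

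Assembling these three vanishings yields $H^{2}(\mathfrak{g}_A(\mathcal{P}),\mathfrak{g}_A(\mathcal{P}))=0$, which is the desired absolute rigidity. There is no real obstacle at this stage of the paper: all the substantive work -- the combinatorial classification identifying contact posets via contact sequences, the triviality of the center for connected $\mathcal{P}$, and the Morse-theoretic contractibility of $\Sigma(\mathcal{P})$ -- has already been carried out. The proof is essentially a bookkeeping argument observing that each factor in the Coll--Gerstenhaber decomposition is individually forced to zero by exactly the hypotheses assumed in the statement.
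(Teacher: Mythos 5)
Your proposal matches the paper's own argument: the paper likewise applies the Coll--Gerstenhaber decomposition of Theorem~\ref{CG}, kills the first summand via the lemma that $Z(\mathfrak{g}_A(\mathcal{P}))$ is trivial for connected $\mathcal{P}$, and kills the other two via the contractibility of $\Sigma(\mathcal{P})$ (Theorem~\ref{Nohomology}) together with the Universal Coefficient Theorem. Your observation that the height-zero and height-one cases are vacuous by Theorem~\ref{thm:h1+dh2} is also exactly the paper's treatment, so the proof is correct and essentially identical in approach.
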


\begin{remark}  If $\mathfrak{g}_A(\mathcal{P})$ is contact, and $\mathcal{P}$ is connected and of height two or less, then by a now-classical theorem of Gerstenhaber and Schack \textup{\textbf{\cite{G3}}}, the second Hochschild cohomology group $H^2(A(\mathcal{P}),A(\mathcal{P}))$ is trivial. This implies that the \textup(associative\textup) incidence algebras corresponding to such connected posets are also rigid.
\end{remark}

\bigskip
\noindent
\textbf{Acknowledgments}  The authors thank Gil Salgado for helpful comments.


\end{document}